\theoremstyle{plain}
\newtheorem{thm}{Theorem}
\begin{document}
\title{On rational systems in the plane. I. Riccati Cases.}

\author[Gabriel Lugo]{Gabriel Lugo}
\address{Department of Mathematics, University of Rhode Island,Kingston, RI 02881-0816, USA;}
\email{glugo@math.uri.edu}
\author[Frank J. Palladino]{Frank J. Palladino}
\address{Department of Mathematics, University of Rhode Island,Kingston, RI 02881-0816, USA;}
\email{frank@math.uri.edu}
\date{December 19, 2011}
\subjclass{39A10,39A11}
\keywords{difference equation, Riccati equation, global asymptotic stability, rational system}

\begin{abstract}
\noindent This paper is the first in a series of papers which will address, on a case by case basis, the special cases of the following rational system in the plane, labeled system \#11. 
$$x_{n+1}=\frac{\alpha_{1}}{A_{1}+y_{n}},\quad y_{n+1}=\frac{\alpha_{2}+\beta_{2}x_{n}+\gamma_{2}y_{n}}{A_{2}+B_{2}x_{n}+C_{2}y_{n}},\quad n=0,1,2,\dots ,$$
with $\alpha_{1},A_{1}>0$ and $\alpha_{2}, \beta_{2}, \gamma_{2}, A_{2}, B_{2}, C_{2}\geq 0$ and $\alpha_{2}+\beta_{2}+\gamma_{2}>0$ and $A_{2}+B_{2}+C_{2}>0$ and nonnegative initial conditions $x_{0}$ and $y_{0}$ so that the denominator is never zero.
In this article we focus on the special cases which are reducible to the Riccati difference equation. 
\end{abstract}
\maketitle

\section{Introduction}
This paper is the first in a series of papers which will address, on a case by case basis, the special cases of the following rational system in the plane, labeled system \#11. 
$$x_{n+1}=\frac{\alpha_{1}}{A_{1}+y_{n}},\quad y_{n+1}=\frac{\alpha_{2}+\beta_{2}x_{n}+\gamma_{2}y_{n}}{A_{2}+B_{2}x_{n}+C_{2}y_{n}},\quad n=0,1,2,\dots ,$$
with $\alpha_{1},A_{1}>0$ and $\alpha_{2}, \beta_{2}, \gamma_{2}, A_{2}, B_{2}, C_{2}\geq 0$ and $\alpha_{2}+\beta_{2}+\gamma_{2}>0$ and $A_{2}+B_{2}+C_{2}>0$ and nonnegative initial conditions $x_{0}$ and $y_{0}$ so that the denominator is never zero.
In this article we focus on the special cases which are reducible to the Riccati difference equation. The special cases of system \#11 which are Riccati or reducible to Riccati are the cases numbered $(11,1)$, $(11,2)$, $(11,3)$, $(11,4)$, $(11,5)$, $(11,7)$, $(11,9)$, $(11,10)$, $(11,11)$, $(11,13)$, $(11,17)$, $(11,19)$, $(11,20)$, $(11,22)$, $(11,24)$, $(11,28)$, and $(11,32)$, in the numbering system developed in \cite{cklm}. Our goal in this article is to determine a complete picture of the qualitative behavior for the cases in the above list to the best of our ability.\par
This article is organized as follows. In Section 2, we present a change of variables for the system numbered $(11,22)$. In the following sections we present a complete description of the qualitative behavior for the cases in the above list to the best of our ability. The cases are presented in ascending numerical order each case having its own theorem and appearing in its own section.

\section{A Change of Variables for the System (11,22)}
Consider the following system of rational difference equations, numbered (11,22) in the numbering system introduced in \cite{cklm}.
$$x_{n+1}=\frac{\alpha_{1}}{A_{1}+y_{n}},\quad n=0,1,\dots,$$
$$y_{n+1}=\alpha_{2}+\beta_{2}x_{n},\quad n=0,1,\dots.$$
With $\alpha_1,A_{1},\alpha_{2},\beta_{2}>0$ and arbitrary nonnegative initial conditions so that the denominator is never zero.
This system admits the following change of variables $y^{*}_{n}=y_{n}$ and $x^{*}_{n}=\beta_{2}x_{n}$, to obtain
$$x^{*}_{n+1}=\frac{\alpha_{1}\beta_{2}}{A_{1}+y^{*}_{n}},\quad n=0,1,\dots,$$
$$y^{*}_{n+1}=\alpha_{2}+x^{*}_{n},\quad n=0,1,\dots.$$
We may relabel our equation so that we have,
$$x_{n+1}=\frac{\alpha_{1}}{A_{1}+y_{n}},\quad n=0,1,\dots,$$
$$y_{n+1}=\alpha_{2}+x_{n},\quad n=0,1,\dots.$$
With $\alpha_1,A_{1},\alpha_{2}>0$ and arbitrary nonnegative initial conditions so that the denominator is never zero.

\section{The System (11,1)}
\begin{thm}
Consider the following system of rational difference equations
$$x_{n+1}=\frac{\alpha_{1}}{A_{1}+y_{n}},\quad n=0,1,\dots,$$
$$y_{n+1}=\frac{\alpha_{2}}{A_{2}},\quad n=0,1,\dots,$$
with $\alpha_1,A_{1},\alpha_{2},A_{2}>0$ and arbitrary nonnegative initial conditions so that the denominator is never zero. 
In this case we have a unique equilibrium 
$$(\bar{x},\bar{y})=\left(\frac{A_{2}\alpha_{1}}{A_{2}A_{1}+\alpha_{2}},\frac{\alpha_{2}}{A_{2}}\right).$$
Moreover, every solution is equal to the equilibrium after at most two steps.
\end{thm}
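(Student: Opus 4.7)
The plan is essentially to observe that the second equation is autonomous and constant, so the system collapses after one iterate of $y$ and two iterates of $x$. I would proceed in three short steps.

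First, I would establish uniqueness of the equilibrium by solving the equilibrium equations directly. An equilibrium $(\bar{x},\bar{y})$ must satisfy $\bar{y} = \alpha_2/A_2$ (forced immediately by the second equation) and $\bar{x} = \alpha_1/(A_1+\bar{y})$; substituting and simplifying the fraction yields $\bar{x} = A_2\alpha_1/(A_2A_1+\alpha_2)$, giving the unique equilibrium claimed.

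Next, I would verify the eventual-equilibrium property by tracking the two coordinates separately. Since $y_{n+1} = \alpha_2/A_2$ does not depend on $(x_n,y_n)$ at all, one has $y_n = \alpha_2/A_2 = \bar{y}$ for every $n \geq 1$. Feeding this into the first equation then gives $x_{n+1} = \alpha_1/(A_1 + \bar{y}) = \bar{x}$ for every $n \geq 1$, i.e.\ $x_n = \bar{x}$ for every $n \geq 2$. Combining these, $(x_n,y_n) = (\bar{x},\bar{y})$ for all $n \geq 2$, which is the advertised ``at most two steps'' conclusion.

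Finally, I would briefly note that the bound of two steps is sharp in general, since the initial conditions $x_0$ and $y_0$ are arbitrary and need not satisfy either of the defining equations. There is really no obstacle here; the only thing to be slightly careful about is the bookkeeping of indices, making sure the denominator $A_1 + y_n$ is nonzero (guaranteed by $A_1 > 0$ and $y_n \geq 0$) so that the iteration is well-defined for all $n$.
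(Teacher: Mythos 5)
Your argument is correct and follows essentially the same route as the paper's proof: both observe that $y_n=\alpha_2/A_2=\bar{y}$ for all $n\geq 1$ and then feed this into the first equation to get $x_n=\bar{x}$ for all $n\geq 2$, with the equilibrium obtained by direct substitution. The only difference is that you spell out the index bookkeeping a bit more explicitly than the paper does, which is fine.
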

\begin{proof}
In this case we clearly have a unique equilibrium 
$$(\bar{x},\bar{y})=\left(\frac{A_{2}\alpha_{1}}{A_{2}A_{1}+\alpha_{2}},\frac{\alpha_{2}}{A_{2}}\right).$$
We have $(x_{1},y_{1})$ is on the line segment $\left(0,\frac{\alpha_{1}}{A_{1}}\right]\times\{\frac{\alpha_{2}}{A_{2}}\}$. Moreover 
$$(x_{n},y_{n})= (\bar{x},\bar{y})=\left(\frac{A_{2}\alpha_{1}}{A_{2}A_{1}+\alpha_{2}},\frac{\alpha_{2}}{A_{2}}\right),\quad n\geq 2.$$
\end{proof}

\section{The System (11,2)}
\begin{thm}
Consider the following system of rational difference equations
$$x_{n+1}=\frac{\alpha_{1}}{A_{1}+y_{n}},\quad n=0,1,\dots,$$
$$y_{n+1}=\frac{\alpha_{2}}{y_{n}},\quad n=0,1,\dots,$$
with $\alpha_1,A_{1},\alpha_{2}>0$ and arbitrary nonnegative initial conditions so that the denominator is never zero. In this case we have a unique equilibrium 
$$(\bar{x},\bar{y})=\left(\frac{\alpha_{1}}{A_{1}+\sqrt{\alpha_{2}}},\sqrt{\alpha_{2}}\right).$$
Moreover, $(x_{n},y_{n})=(x_{n+2},y_{n+2})$ for $n\geq 1$. Therefore every solution is eventually periodic with not necessarily prime period 2.
\end{thm}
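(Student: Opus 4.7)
The plan is to exploit the fact that the second equation $y_{n+1}=\alpha_{2}/y_{n}$ is completely decoupled from $x_{n}$, so the $y$-sequence can be analyzed in isolation and then fed back into the $x$-equation.

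First I would compute the equilibrium. Setting $\bar{y}=\alpha_{2}/\bar{y}$ and taking the positive root (forced by nonnegativity of the iterates) gives $\bar{y}=\sqrt{\alpha_{2}}$, and substituting into $\bar{x}=\alpha_{1}/(A_{1}+\bar{y})$ yields the stated $(\bar{x},\bar{y})$. Uniqueness is immediate since each defining equation admits only one nonnegative solution.

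Next I would establish the two-periodicity of $(y_{n})$ by a single direct substitution: for every $n\geq 0$,
$$y_{n+2}=\frac{\alpha_{2}}{y_{n+1}}=\frac{\alpha_{2}}{\alpha_{2}/y_{n}}=y_{n}.$$
So $y_{n}$ alternates between $y_{0}$ and $\alpha_{2}/y_{0}$ for all $n\geq 0$ (these two values coincide precisely when $y_{0}=\sqrt{\alpha_{2}}$, in which case the period is $1$).

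Finally I would transfer this periodicity to $x_{n}$. Because $x_{n+1}$ depends only on $y_{n}$, the identity $y_{n+1}=y_{n-1}$ valid for $n\geq 1$ gives
$$x_{n+2}=\frac{\alpha_{1}}{A_{1}+y_{n+1}}=\frac{\alpha_{1}}{A_{1}+y_{n-1}}=x_{n},\qquad n\geq 1.$$
Combined with $y_{n+2}=y_{n}$ this yields $(x_{n},y_{n})=(x_{n+2},y_{n+2})$ for all $n\geq 1$, which is the eventual period-$2$ statement. There is no real obstacle here; the only mild subtlety is that the $x$-recursion needs one step to ``sync up'' with the already-periodic $y$-sequence, which is why the conclusion holds for $n\geq 1$ rather than $n\geq 0$.
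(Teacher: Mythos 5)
Your proposal is correct and follows essentially the same route as the paper: read off the unique nonnegative equilibrium from $\bar{y}^{2}=\alpha_{2}$, observe that the decoupled Riccati equation gives $y_{n+2}=\alpha_{2}/y_{n+1}=y_{n}$, and feed this back into the $x$-equation to get $x_{n+2}=x_{n}$ for $n\geq 1$ (the paper writes this as $x_{n+3}=x_{n+1}$, which is the same statement). The only difference is that the paper also records the linearization eigenvalues $\lambda_{1}=0$, $\lambda_{2}=-1$, which is not needed for the claims in the theorem.
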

\begin{proof}
To find the equilibria we solve:
$$\bar{x}=\frac{\alpha_{1}}{A_{1}+\bar{y}},$$
$$\bar{y}=\frac{\alpha_{2}}{\bar{y}}.$$
So $\bar{y}=\sqrt{\alpha_{2}}$, and $\bar{x}=\frac{\alpha_{1}}{A_{1}+\sqrt{\alpha_{2}}}$.
Thus, we have a unique equilibrium in this case.
Performing linearized stability analysis about the equilibrium $(\bar{x},\bar{y})$ we solve:
$$det\left(\begin{array}{cc}
-\lambda & \frac{-\bar{x}}{A_{1}+\bar{y}}\\               
0 & -1-\lambda\\ 
\end{array}
\right)= 0.$$
$$\lambda^{2}+\lambda =0.$$
So, $\lambda_{1}=0$ and $\lambda_{2}=-1$.
Notice that, $$y_{n+2}=\frac{\alpha_{2}}{y_{n+1}}=y_{n},\quad n=0,1,\dots.$$
Thus, $$x_{n+3}=\frac{\alpha_{1}}{A_{1}+y_{n+2}}=\frac{\alpha_{1}}{A_{1}+y_{n}}=x_{n+1},\quad n=0,1,\dots.$$
Thus, $(x_{n},y_{n})=(x_{n+2},y_{n+2})$ for $n\geq 1$. Therefore, every solution is eventually periodic with not necessarily prime period 2 for system (11,2).
\end{proof}

\section{The System (11,3)}
\begin{thm}
Consider the following system of rational difference equations
$$x_{n+1}=\frac{\alpha_{1}}{A_{1}+y_{n}},\quad n=0,1,\dots,$$
$$y_{n+1}=\frac{\alpha_{2}}{x_{n}},\quad n=0,1,\dots,$$
with $\alpha_1,A_{1},\alpha_{2}>0$ and arbitrary nonnegative initial conditions so that the denominator is never zero.
For this system of rational difference equations there are 2 regions in parametric space with distinct global behavior. The behavior is as follows:
\begin{enumerate}
\item If $\alpha_{1}>\alpha_{2}$, then the unique equilibrium $$(\bar{x},\bar{y})=\left(\frac{\alpha_{1}(\alpha_{1}-\alpha_{2})}{A_{1}(\alpha_{1}-\alpha_{2})+\alpha_{2}A_{1}},\frac{\alpha_{2}A_{1}}{\alpha_{1}-\alpha_{2}}\right),$$
is globally asymptotically stable.
\item If $\alpha_{1}\leq \alpha_{2}$, then there are no nonnegative equilibria and $\lim_{n\rightarrow \infty}(x_{n},y_{n})=(0,\infty)$ for all choices of initial conditions.
\end{enumerate}

\end{thm}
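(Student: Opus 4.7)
My plan is to reduce the two--dimensional system to a scalar Riccati recursion by iterating once, and then solve that Riccati equation in closed form via the standard reciprocal substitution.

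First I would compose the two equations to get
$$x_{n+2}=\frac{\alpha_{1}}{A_{1}+y_{n+1}}=\frac{\alpha_{1}}{A_{1}+\alpha_{2}/x_{n}}=\frac{\alpha_{1}x_{n}}{A_{1}x_{n}+\alpha_{2}},\qquad n\geq 1,$$
so that the even--indexed subsequence $\{x_{2n}\}$ and the odd--indexed subsequence $\{x_{2n+1}\}$ each satisfy the same Riccati equation $u_{n+1}=\alpha_{1}u_{n}/(A_{1}u_{n}+\alpha_{2})$. (The restriction $n\geq 1$ and the standing hypothesis that no denominator vanishes guarantee $x_{n}>0$ for all $n\geq 1$, so the substitution below is legal.)

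Next I would set $v_{n}=1/u_{n}$, which linearizes the recursion to
$$v_{n+1}=\frac{\alpha_{2}}{\alpha_{1}}\,v_{n}+\frac{A_{1}}{\alpha_{1}}.$$
The explicit solution is
$$v_{n}=\Big(\frac{\alpha_{2}}{\alpha_{1}}\Big)^{\!n}\big(v_{0}-v^{*}\big)+v^{*},\qquad v^{*}=\frac{A_{1}}{\alpha_{1}-\alpha_{2}}\quad(\text{when }\alpha_{1}\neq\alpha_{2}),$$
and $v_{n}=v_{0}+n A_{1}/\alpha_{1}$ when $\alpha_{1}=\alpha_{2}$. I would then split into cases. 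If $\alpha_{1}>\alpha_{2}$, the contraction factor $\alpha_{2}/\alpha_{1}\in[0,1)$ forces $v_{n}\to v^{*}$, so $x_{2n}$ and $x_{2n+1}$ both converge to $(\alpha_{1}-\alpha_{2})/A_{1}=\bar{x}$; applying $y_{n+1}=\alpha_{2}/x_{n}$ gives $y_{n}\to\alpha_{2}/\bar{x}=\bar{y}$, establishing global asymptotic stability of the unique equilibrium identified in the statement (attractivity plus linearized stability at $\bar x,\bar y$, which is a routine eigenvalue check with $|\lambda_{1,2}|=\sqrt{\alpha_{2}/\alpha_{1}}<1$). If $\alpha_{1}<\alpha_{2}$, then $v^{*}<0<v_{0}$ and the factor $\alpha_{2}/\alpha_{1}>1$ drives $v_{n}\to+\infty$; if $\alpha_{1}=\alpha_{2}$ the arithmetic growth of $v_{n}$ does the same. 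In either subcase $x_{n}\to 0$ and hence $y_{n}=\alpha_{2}/x_{n-1}\to\infty$.

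Finally I would verify the equilibrium claim in case~(1) by solving $\bar y=\alpha_{2}/\bar x$ together with $\bar x=\alpha_{1}/(A_{1}+\bar y)$, which yields a unique positive solution exactly when $\alpha_{1}>\alpha_{2}$, matching the formula in the statement; when $\alpha_{1}\leq\alpha_{2}$ the same algebra shows no nonnegative equilibrium exists. I do not expect a real obstacle: the only mildly delicate point is confirming that the monotone divergence in the degenerate case $\alpha_{1}=\alpha_{2}$ is captured by the same reciprocal substitution, which the linear formula above handles transparently.
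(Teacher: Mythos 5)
Your argument is correct, and it reaches the same conclusion by a somewhat different mechanism than the paper. The paper's proof applies the explicit conjugacy $h((x,y))=\left(y,\frac{\alpha_{1}}{x}-A_{1}\right)$ to transform the planar map into the second-order equation $u_{n+1}=\frac{\alpha_{2}}{\alpha_{1}}\left(A_{1}+u_{n-1}\right)$, which is already affine in the transformed (essentially $y$-side) variable, and then cites the behavior of that linear equation for both the case $\alpha_{1}>\alpha_{2}$ and the case $\alpha_{1}\leq\alpha_{2}$; local stability is checked by the same eigenvalue computation you indicate, $\lambda_{1,2}=\pm\sqrt{\alpha_{2}/\alpha_{1}}$. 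You instead stay in the $x$-variable, compose once to get the genuine Riccati second-iterate $x_{n+2}=\alpha_{1}x_{n}/(A_{1}x_{n}+\alpha_{2})$ on the even and odd subsequences, and linearize by the reciprocal substitution, obtaining a closed-form solution for $v_{n}=1/x_{n}$. What your route buys is explicitness and self-containment: the closed formula handles the borderline case $\alpha_{1}=\alpha_{2}$ transparently (arithmetic divergence of $v_{n}$), whereas the paper disposes of $\alpha_{1}\leq\alpha_{2}$ with a one-line appeal to the conjugate linear equation; your computation of $\bar{x}=(\alpha_{1}-\alpha_{2})/A_{1}$ also correctly simplifies the paper's formula, since $A_{1}(\alpha_{1}-\alpha_{2})+\alpha_{2}A_{1}=A_{1}\alpha_{1}$. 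What the paper's conjugacy framework buys is uniformity: the same map $h$ and the same "decouples into Riccati/linear subsequences" template are reused verbatim across many of the other cases $(11,7)$, $(11,10)$, $(11,11)$, $(11,17)$, etc., so the authors get this case almost for free within that scheme. One small point to keep explicit in a final write-up: global asymptotic stability needs local stability in addition to global attractivity, so the eigenvalue check you mention parenthetically should be stated as part of the proof rather than left as a remark.
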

\begin{proof}
To find the equilibria we solve:
$$\bar{x}=\frac{\alpha_{1}}{A_{1}+\bar{y}},$$
$$\bar{y}=\frac{\alpha_{2}}{\bar{x}}.$$
So 
$$\bar{y}=\frac{\alpha_{2}}{\alpha_{1}}\left(A_{1}+\bar{y}\right),$$
thus, if $\alpha_{1}>\alpha_{2}$, then 
$$\bar{y}=\frac{\alpha_{2}A_{1}}{\alpha_{1}-\alpha_{2}}$$
and
$$\bar{x}=\frac{\alpha_{1}(\alpha_{1}-\alpha_{2})}{A_{1}(\alpha_{1}-\alpha_{2})+\alpha_{2}A_{1}}.$$
If $\alpha_{1}\leq \alpha_{2}$, then there are no nonnegative equilibria.
Performing linearized stability analysis about the equilibrium $(\bar{x},\bar{y})$ in the case $\alpha_{1}>\alpha_{2}$ we solve:
$$det\left(\begin{array}{cc}
-\lambda & \frac{-\bar{x}}{A_{1}+\bar{y}}\\               
\frac{-\bar{y}}{\bar{x}} & -\lambda\\ 
\end{array}
\right)= 0.$$
$$\lambda^{2}-\frac{\bar{y}}{A_{1}+\bar{y}}=0.$$
So, $$\lambda_{1,2}=\pm \sqrt{\frac{\alpha_{2}}{\alpha_{1}}}.$$
Thus, our equilibrium is locally asymptotically stable in this case.
Now we will show that our rational system on $(0,\frac{\alpha_{1}}{A_{1}})\times (0,\infty)$ is topologically conjugate to the following second order rational difference equation on $(0,\infty)^{2}$,
$$u_{n+1}=\frac{\alpha_{2}}{\left(\frac{\alpha_{1}}{A_{1}+u_{n-1}}\right)}=\frac{\alpha_{2}}{\alpha_{1}}\left(A_{1}+u_{n-1}\right),\quad n=0,1,\dots.$$
Let $f:(0,\frac{\alpha_{1}}{A_{1}})\times (0,\infty)\rightarrow (0,\frac{\alpha_{1}}{A_{1}})\times (0,\infty)$ be 
$$f((x,y))=\left(\frac{\alpha_{1}}{A_{1}+y},\frac{\alpha_{2}}{x}\right).$$
Let $g: (0,\infty)^{2}\rightarrow (0,\infty)^{2}$ be
$$g((x,y))=\left(\frac{\alpha_{2}}{\left(\frac{\alpha_{1}}{A_{1}+y}\right)},x\right).$$
Let $h:(0,\frac{\alpha_{1}}{A_{1}})\times (0,\infty)\rightarrow (0,\infty)^{2}$ be 
$$h((x,y))=\left(y,\frac{\alpha_{1}}{x}-A_{1}\right).$$
Let $h^{-1}:(0,\infty)^{2}\rightarrow (0,\frac{\alpha_{1}}{A_{1}})\times (0,\infty)$ be
 $$h^{-1}((x,y))=\left(\frac{\alpha_{1}}{A_{1}+y},x\right).$$
Now we will show that $h^{-1}\circ g \circ h = f$.
$$h^{-1}\circ g \circ h((x,y))=h^{-1}\circ g\left(\left(y,\frac{\alpha_{1}}{x}-A_{1}\right)\right)=h^{-1}\left(\left(\frac{\alpha_{2}}{x},y\right)\right)$$
$$= \left(\frac{\alpha_{1}}{A_{1}+y},\frac{\alpha_{2}}{x}\right)=f((x,y)).$$
So, since the system (11,3) is conjugate to the linear system above we see that the unique equilibrium is globally asymptotically stable in the case $\alpha_{1}>\alpha_{2}$.
Also, since the system (11,3) is conjugate to the linear system above we see that $\lim_{n\rightarrow \infty}(x_{n},y_{n})=(0,\infty)$ in the case $\alpha_{1}\leq\alpha_{2}$.
\end{proof}

\section{The System (11,4)}
\begin{thm}
Consider the following system of rational difference equations
$$x_{n+1}=\frac{\alpha_{1}}{A_{1}+y_{n}},\quad n=0,1,\dots,$$
$$y_{n+1}=\gamma_{2}y_{n},\quad n=0,1,\dots,$$
with $\alpha_1,A_{1},\gamma_{2}>0$ and arbitrary nonnegative initial conditions so that the denominator is never zero.
For this system of rational difference equations there are 3 regions in parametric space with distinct global behavior. The behavior is as follows:
\begin{enumerate}
\item If $\gamma_{2}>1$, then for any choice of initial conditions with $y_{0}\neq 0$, $$\lim_{n\rightarrow\infty}(x_{n},y_{n})=\left(0,\infty\right).$$
Moreover, $[0,\infty)\times \{0\}$ is the stable manifold for our saddle equilibrium $$(\bar{x},\bar{y})=\left(\frac{\alpha_{1}}{A_{1}},0\right).$$
\item If $\gamma_{2}=1$, then every point in the set
$$\left\{\left(\frac{\alpha_{1}}{A_{1}+v},v\right)|v\in [0,\infty )\right\},$$
is a nonhyperbolic equilibrium point. Furthermore, for each $v\in [0,\infty )$, the set $[0,\infty)\times \{v\}$ is a stable manifold for the nonhyperbolic equilibrium $\left(\frac{\alpha_{1}}{A_{1}+v},v\right)$.

\item If $\gamma_{2}<1$, the unique equilibrium $(\bar{x},\bar{y})=\left(\frac{\alpha_{1}}{A_{1}},0\right)$ is globally asymptotically stable.

\end{enumerate}
\end{thm}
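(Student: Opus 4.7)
The key observation driving the whole proof is that the $y$--equation decouples: $y_{n+1}=\gamma_{2}y_{n}$ is a first-order linear equation with closed-form solution $y_{n}=\gamma_{2}^{n}y_{0}$. All three cases of the theorem then follow by plugging this into the $x$--equation, so the plan is to handle equilibria and linearization first and then exploit this closed form.

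First, I would solve for equilibria. The relation $\bar{y}=\gamma_{2}\bar{y}$ gives $\bar{y}=0$ (hence $\bar{x}=\alpha_{1}/A_{1}$) whenever $\gamma_{2}\neq 1$, and in the degenerate case $\gamma_{2}=1$ any $\bar{y}\geq 0$ is admissible with $\bar{x}=\alpha_{1}/(A_{1}+\bar{y})$. Linearizing about a generic equilibrium, the Jacobian
$$J(\bar{x},\bar{y})=\begin{pmatrix}0 & -\bar{x}/(A_{1}+\bar{y})\\ 0 & \gamma_{2}\end{pmatrix}$$
is upper triangular with eigenvalues $\lambda_{1}=0$ and $\lambda_{2}=\gamma_{2}$, which immediately gives the hyperbolicity classification: saddle if $\gamma_{2}>1$, nonhyperbolic if $\gamma_{2}=1$, sink if $\gamma_{2}<1$. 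The eigenvector for $\lambda_{1}=0$ is horizontal, which is a hint that the stable manifolds in (1) and (2) are horizontal lines.

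Next I would run the three cases using $y_{n}=\gamma_{2}^{n}y_{0}$. For (3), $\gamma_{2}<1$ forces $y_{n}\to 0$, hence $x_{n+1}=\alpha_{1}/(A_{1}+y_{n})\to\alpha_{1}/A_{1}$, which upgrades local asymptotic stability to global. For (1), if $y_{0}=0$ then $y_{n}\equiv 0$ and $x_{n}=\alpha_{1}/A_{1}$ for all $n\geq 1$, so every point of $[0,\infty)\times\{0\}$ lands on the equilibrium in one step; this confirms the stable manifold description. If instead $y_{0}>0$ then $y_{n}\to\infty$ and consequently $x_{n}\to 0$, giving the claimed limit $(0,\infty)$. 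For (2), $\gamma_{2}=1$ yields $y_{n}=y_{0}=v$ for all $n$, so $x_{n}=\alpha_{1}/(A_{1}+v)$ for $n\geq 1$, and the solution reaches the equilibrium $\bigl(\alpha_{1}/(A_{1}+v),v\bigr)$ in one step for any $x_{0}\geq 0$; this is exactly the stable manifold $[0,\infty)\times\{v\}$.

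There is essentially no obstacle here: the decoupling collapses the two-dimensional dynamics to a trivial scalar geometric sequence in $y$, and the $x$ coordinate is determined from $y$ one step later. The only mild care needed is in case (1), where one must separate the invariant line $\{y=0\}$ (on which trajectories reach equilibrium) from its complement (on which trajectories escape to $(0,\infty)$), in order to justify that $[0,\infty)\times\{0\}$ is precisely the stable manifold of the saddle.
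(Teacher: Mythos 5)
Your proposal is correct and follows essentially the same route as the paper: identify the equilibria (unique when $\gamma_{2}\neq 1$, a continuum when $\gamma_{2}=1$), read off the eigenvalues $0$ and $\gamma_{2}$ from the triangular Jacobian, and then settle each parameter regime directly from the decoupled $y$--equation, noting that solutions on the horizontal lines reach their equilibrium in one step. Your use of the explicit formula $y_{n}=\gamma_{2}^{n}y_{0}$ merely makes explicit what the paper dismisses as ``clearly,'' so there is no substantive difference.
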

\begin{proof}
To find the equilibria we solve:
$$\bar{x}=\frac{\alpha_{1}}{A_{1}+\bar{y}},$$
$$\bar{y}=\gamma_{2}\bar{y}.$$
So if $\gamma_{2}\neq 1$ then there is a unique equilibrium,
$$(\bar{x},\bar{y})=\left(\frac{\alpha_{1}}{A_{1}},0\right).$$
However if $\gamma_{2}=1$ then every point in the set
$$\left\{\left(\frac{\alpha_{1}}{A_{1}+v},v\right)|v\in [0,\infty )\right\},$$
is an equilibrium point.
\par\vspace{.4 cm} 

Performing linearized stability analysis about the equilibrium $(\bar{x},\bar{y})$ we solve:
$$det\left(\begin{array}{cc}
-\lambda & \frac{-\bar{x}}{A_{1}+\bar{y}}\\               
0 & \gamma_{2}-\lambda\\ 
\end{array}
\right)= 0.$$
$$\lambda^{2}-\gamma_{2}\lambda =0.$$
So $\lambda_{1}=0$ and $\lambda_{2}=\gamma_{2}$. So, the unique equilibrium is locally asymptotically stable when $\gamma_{2}<1$. The unique equilibrium is a saddle point when $\gamma_{2}>1$. Furthermore all equilibria are nonhyperbolic when $\gamma_{2}=1$.
\par\vspace{.4 cm}
Suppose $\gamma_{2}<1$, then clearly for an arbitrary choice of initial conditions $\lim_{n\rightarrow\infty}(x_{n},y_{n})=(\bar{x},\bar{y})=\left(\frac{\alpha_{1}}{A_{1}},0\right)$. So, when $\gamma_{2}<1$ the unique equilibrium is globally asymptotically stable.
\par\vspace{.4 cm}
Suppose $\gamma_{2}>1$, then clearly for any choice of initial conditions with $y_{0}\neq 0$, $\lim_{n\rightarrow\infty}(x_{n},y_{n})=(\bar{x},\bar{y})=\left(0,\infty\right)$.
If we choose $(x_{0},y_{0})\in [0,\infty)\times \{0\}$, then $(x_{n},y_{n})=(\bar{x},\bar{y})=\left(\frac{\alpha_{1}}{A_{1}},0\right)$ for $n\geq 1$. Thus in the case $\gamma_{2}>1$, $[0,\infty)\times \{0\}$ is the stable manifold for our saddle equilibrium.
\par\vspace{.4 cm}
Suppose $\gamma_{2}=1$, then clearly $(x_{n},y_{n})=\left(\frac{\alpha_{1}}{A_{1}+y_{0}},y_{0}\right)$ for $n\geq 1$. Thus every point in the set
$$\left\{\left(\frac{\alpha_{1}}{A_{1}+v},v\right)|v\in [0,\infty )\right\},$$
is a nonhyperbolic equilibrium point. Furthermore, for each $v\in [0,\infty )$, the set \newline$[0,\infty)\times \{v\}$ is a stable manifold for the nonhyperbolic equilibrium $\left(\frac{\alpha_{1}}{A_{1}+v},v\right)$.
\end{proof}

\section{The System (11,5)}
\begin{thm}
Consider the following system of rational difference equations
$$x_{n+1}=\frac{\alpha_{1}}{A_{1}+y_{n}},\quad n=0,1,\dots,$$
$$y_{n+1}=\frac{\beta_{2}}{B_{2}},\quad n=0,1,\dots,$$
with $\alpha_1,A_{1},\beta_{2},B_{2}>0$ and arbitrary nonnegative initial conditions so that the denominator is never zero.
In this case we have a unique equilibrium 
$$(\bar{x},\bar{y})=\left(\frac{B_{2}\alpha_{1}}{B_{2}A_{1}+\beta_{2}},\frac{\beta_{2}}{B_{2}}\right).$$
Moreover, every solution is equal to the equilibrium after at most two steps.
\end{thm}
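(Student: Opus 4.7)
The plan is to mirror the proof of System (11,1), since the qualitative picture is the same: the $y$-component collapses to the constant $\beta_2/B_2$ immediately, and then $x$ is forced to the equilibrium value one step later.

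First I would verify uniqueness of the equilibrium. Setting $\bar{y}=\beta_2\bar{x}/(B_2\bar{x})=\beta_2/B_2$ (which is defined since the standing hypothesis rules out zero denominators, and in particular forces $x_0>0$, hence $x_n>0$ for all $n$ by the first equation), and then plugging into $\bar{x}=\alpha_1/(A_1+\bar{y})$ yields $\bar{x}=B_2\alpha_1/(B_2A_1+\beta_2)$. This establishes the claimed form of the unique equilibrium.

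Next I would trace an arbitrary orbit for two steps. Because $x_n>0$ for every $n\geq 0$, the second recursion simplifies to $y_{n+1}=\beta_2 x_n/(B_2 x_n)=\beta_2/B_2$ for all $n\geq 0$; in particular $y_1=\beta_2/B_2=\bar{y}$, and so $(x_1,y_1)$ lies on the horizontal segment $(0,\alpha_1/A_1]\times\{\beta_2/B_2\}$. Feeding $y_1=\bar{y}$ back into the first equation gives
$$x_2=\frac{\alpha_1}{A_1+\beta_2/B_2}=\frac{B_2\alpha_1}{B_2A_1+\beta_2}=\bar{x},$$
while $y_2=\beta_2/B_2=\bar{y}$ as before. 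A trivial induction then shows $(x_n,y_n)=(\bar{x},\bar{y})$ for every $n\geq 2$, which is exactly the claim that every solution reaches the equilibrium after at most two steps.

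There is no real obstacle in this argument; the only thing to be careful about is the standing assumption on initial conditions, which guarantees $x_0>0$ so that the division $\beta_2 x_0/(B_2 x_0)$ at the first step is legitimate. Once that is noted the proof is a direct two-line computation, essentially identical in structure to the proof given for System (11,1).
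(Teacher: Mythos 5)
Your proposal is correct and follows essentially the same route as the paper: $y_{1}=\beta_{2}/B_{2}=\bar{y}$ immediately, so $(x_{1},y_{1})$ lies on $\left(0,\tfrac{\alpha_{1}}{A_{1}}\right]\times\{\beta_{2}/B_{2}\}$, and then $x_{2}=\alpha_{1}/(A_{1}+\bar{y})=\bar{x}$, giving $(x_{n},y_{n})=(\bar{x},\bar{y})$ for $n\geq 2$. Your side remark about needing $x_{0}>0$ pertains only to the unreduced form $y_{n+1}=\beta_{2}x_{n}/(B_{2}x_{n})$ and is harmless; as the theorem is stated with the constant $\beta_{2}/B_{2}$, no such caveat is needed.
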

\begin{proof}
In this case we clearly have a unique equilibrium 
$$(\bar{x},\bar{y})=\left(\frac{B_{2}\alpha_{1}}{B_{2}A_{1}+\beta_{2}},\frac{\beta_{2}}{B_{2}}\right).$$
We have $(x_{1},y_{1})$ is on the line segment $\left(0,\frac{\alpha_{1}}{A_{1}}\right]\times\{\frac{\beta_{2}}{B_{2}}\}$. Moreover 
$$(x_{n},y_{n})= (\bar{x},\bar{y})=\left(\frac{B_{2}\alpha_{1}}{B_{2}A_{1}+\beta_{2}},\frac{\beta_{2}}{B_{2}}\right),\quad n\geq 2.$$
\end{proof}

\section{The System (11,7)}
\begin{thm}
Consider the following system of rational difference equations
$$x_{n+1}=\frac{\alpha_{1}}{A_{1}+y_{n}},\quad n=0,1,\dots,$$
$$y_{n+1}=\beta_{2}x_{n},\quad n=0,1,\dots,$$
with $\alpha_1,A_{1},\beta_{2}>0$ and arbitrary nonnegative initial conditions so that the denominator is never zero.
In this case the unique nonnegative equilibrium $$(\bar{x},\bar{y})= \left(\frac{-A_{1}+\sqrt{A_{1}^{2}+4\beta_{2}\alpha_{1}}}{2\beta_{2}},\frac{-A_{1}+\sqrt{A_{1}^{2}+4\beta_{2}\alpha_{1}}}{2}\right)$$
is globally asymptotically stable.
\end{thm}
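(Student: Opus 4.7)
The plan is to reduce the two-dimensional system to a scalar Riccati-type second-order recurrence and then, splitting by parity, to a first-order Riccati equation whose global behavior is standard. The crucial observation is that the $y$-equation gives $y_{n}=\beta_{2}x_{n-1}$ for $n\geq 1$, so substituting into the $x$-equation yields
$$x_{n+1}=\frac{\alpha_{1}}{A_{1}+\beta_{2}x_{n-1}},\quad n\geq 1.$$
Consequently the even-indexed subsequence $\{x_{2k}\}$ and the odd-indexed subsequence $\{x_{2k+1}\}$ both satisfy the scalar first-order recurrence $u_{k+1}=f(u_{k})$ with $f(u)=\alpha_{1}/(A_{1}+\beta_{2}u)$, started from $x_{0}$ and $x_{1}=\alpha_{1}/(A_{1}+y_{0})$ respectively.

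Next I would analyze this scalar Riccati equation on $[0,\infty)$. It has the unique nonnegative fixed point $\bar{x}$, namely the positive root of $\beta_{2}u^{2}+A_{1}u-\alpha_{1}=0$. Global asymptotic stability follows from the classical M\"obius linearization: the substitution $u_{k}=\alpha_{1}z_{k-1}/z_{k}$ converts the recurrence into the linear second-order equation $z_{k+1}=A_{1}z_{k}+\alpha_{1}\beta_{2}z_{k-1}$, whose characteristic roots are $\lambda_{\pm}=(A_{1}\pm\sqrt{A_{1}^{2}+4\alpha_{1}\beta_{2}})/2$ with $\lambda_{+}>0>\lambda_{-}$ and $|\lambda_{+}|>|\lambda_{-}|$. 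Any positive initial data produces $z_{k}\sim c\lambda_{+}^{k}$ with $c>0$, so $u_{k}\to \alpha_{1}/\lambda_{+}=\bar{x}$. Both parity subsequences of $\{x_{n}\}$ therefore converge to $\bar{x}$, hence $x_{n}\to \bar{x}$ and $y_{n}=\beta_{2}x_{n-1}\to \beta_{2}\bar{x}=\bar{y}$, which gives global attraction.

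For local asymptotic stability I would compute the Jacobian of the two-dimensional map at $(\bar{x},\bar{y})$; its characteristic equation is $\lambda^{2}+\beta_{2}\bar{x}/(A_{1}+\bar{y})=0$, and since $\bar{y}=\beta_{2}\bar{x}$ and $A_{1}>0$, each eigenvalue has modulus $\sqrt{\beta_{2}\bar{x}/(A_{1}+\beta_{2}\bar{x})}<1$. Combined with global attraction, this yields global asymptotic stability. The only substantive step is the scalar Riccati analysis; everything else is routine, and the argument mirrors the reduction-to-a-simpler-equation strategy already used for system $(11,3)$.
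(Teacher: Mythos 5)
Your proof is correct, and it follows the same underlying strategy as the paper -- eliminate one variable to obtain the second-order equation $x_{n+1}=\alpha_{1}/(A_{1}+\beta_{2}x_{n-1})$, which decouples by parity into first-order Riccati equations -- but you execute it differently at both ends. The paper builds an explicit topological conjugacy $h(x,y)=\left(y,\frac{\alpha_{1}}{x}-A_{1}\right)$ between the planar map on $(0,\frac{\alpha_{1}}{A_{1}})\times(0,\infty)$ and the second-order equation $u_{n+1}=\beta_{2}\alpha_{1}/(A_{1}+u_{n-1})$, and then simply cites the known global theory of the Riccati equation to conclude global asymptotic stability; you instead substitute $y_{n}=\beta_{2}x_{n-1}$ directly and then actually prove the Riccati convergence via the M\"obius linearization $u_{k}=\alpha_{1}z_{k-1}/z_{k}$, $z_{k+1}=A_{1}z_{k}+\alpha_{1}\beta_{2}z_{k-1}$, identifying $\alpha_{1}/\lambda_{+}=\bar{x}$. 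Your route is more self-contained (no appeal to the Riccati literature) and works verbatim for all nonnegative initial conditions, including boundary points such as $x_{0}=0$ or $y_{0}=0$ that lie outside the open region on which the paper's conjugacy is defined; the paper's route, by contrast, packages the reduction as a conjugacy statement reusable across all the other cases in the article. Two small points you assert but should justify: that positivity of the sequence $z_{k}$ forces a strictly positive coefficient on $\lambda_{+}^{k}$ in $z_{k}=c_{+}\lambda_{+}^{k}+c_{-}\lambda_{-}^{k}$ (if $c_{+}\leq 0$ the sign alternation or eventual negativity coming from $\lambda_{-}<0<\lambda_{+}$, $|\lambda_{-}|<\lambda_{+}$, contradicts $z_{k}>0$), and the routine deduction that convergence of both parity subsequences plus local asymptotic stability (your eigenvalue computation, which matches the paper's) gives global asymptotic stability.
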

\begin{proof}
To find the equilibria we solve:
$$\bar{x}=\frac{\alpha_{1}}{A_{1}+\bar{y}},$$
$$\bar{y}=\beta_{2}\bar{x}.$$
So,
$$\bar{y}=\beta_{2}\frac{\alpha_{1}}{A_{1}+\bar{y}},$$
$$\bar{y}^{2}+A_{1}\bar{y}-\beta_{2}\alpha_{1}=0.$$
Hence 
$$\bar{y}_{1,2}= \frac{-A_{1}\pm\sqrt{A_{1}^{2}+4\beta_{2}\alpha_{1}}}{2}.$$
However, one of these roots is clearly negative so we have a unique nonnegative equilibrium,
$$(\bar{x},\bar{y})= \left(\frac{-A_{1}+\sqrt{A_{1}^{2}+4\beta_{2}\alpha_{1}}}{2\beta_{2}},\frac{-A_{1}+\sqrt{A_{1}^{2}+4\beta_{2}\alpha_{1}}}{2}\right).$$
Performing linearized stability analysis about the equilibrium $(\bar{x},\bar{y})$ we solve:
$$det\left(\begin{array}{cc}
-\lambda & \frac{-\bar{x}}{A_{1}+\bar{y}}\\               
\beta_{2} & -\lambda\\ 
\end{array}
\right)= 0.$$
$$\lambda^{2} + \frac{\beta_{2}\bar{x}}{A_{1}+\bar{y}}=\lambda^{2} + \frac{\bar{y}}{A_{1}+\bar{y}}=0.$$
So,
$$\lambda_{1,2}=\pm \sqrt{\frac{\bar{y}}{A_{1}+\bar{y}}}= \pm \sqrt{\frac{-A_{1}+\sqrt{A_{1}^{2}+4\beta_{2}\alpha_{1}}}{A_{1}+\sqrt{A_{1}^{2}+4\beta_{2}\alpha_{1}}}}.$$
Since our roots are inside the unit disk, our unique equilibrium is locally asymptotically stable.
Now we will show that our rational system on $(0,\frac{\alpha_{1}}{A_{1}})\times (0,\infty)$ is topologically conjugate to the following second order rational difference equation on $(0,\infty)^{2}$,
$$u_{n+1}= \beta_{2}\left(\frac{\alpha_{1}}{A_{1}+u_{n-1}}\right),\quad n=0,1,\dots.$$
Let $f:(0,\frac{\alpha_{1}}{A_{1}})\times (0,\infty)\rightarrow (0,\frac{\alpha_{1}}{A_{1}})\times (0,\infty)$ be 
$$f((x,y))=\left(\frac{\alpha_{1}}{A_{1}+y},\beta_{2}x\right).$$
Let $g: (0,\infty)^{2}\rightarrow (0,\infty)^{2}$ be
$$g((x,y))=\left( \beta_{2}\left(\frac{\alpha_{1}}{A_{1}+y}\right) ,x\right).$$
Let $h:(0,\frac{\alpha_{1}}{A_{1}})\times (0,\infty)\rightarrow (0,\infty)^{2}$ be 
$$h((x,y))=\left(y,\frac{\alpha_{1}}{x}-A_{1}\right).$$
Let $h^{-1}:(0,\infty)^{2}\rightarrow (0,\frac{\alpha_{1}}{A_{1}})\times (0,\infty)$ be
 $$h^{-1}((x,y))=\left(\frac{\alpha_{1}}{A_{1}+y},x\right).$$
Now we will show that $h^{-1}\circ g \circ h = f$.
$$h^{-1}\circ g \circ h((x,y))=h^{-1}\circ g\left(\left(y,\frac{\alpha_{1}}{x}-A_{1}\right)\right)=h^{-1}\left(\left(\beta_{2}x,y\right)\right)$$
$$= \left(\frac{\alpha_{1}}{A_{1}+y},\beta_{2}x\right)=f((x,y)).$$
Since our system is conjugate to a second order rational difference equation which decouples into two Riccati equations, we see that the unique equilibrium which we found earlier is globally asymptotically stable.
\end{proof}

\section{The System (11,9)}
\begin{thm}
Consider the following system of rational difference equations
$$x_{n+1}=\frac{\alpha_{1}}{A_{1}+y_{n}},\quad n=0,1,\dots,$$
$$y_{n+1}=\frac{\gamma_{2}}{C_{2}},\quad n=0,1,\dots,$$
with $\alpha_1,A_{1},\gamma_{2},C_{2}>0$ and arbitrary nonnegative initial conditions so that the denominator is never zero.
In this case we clearly have a unique equilibrium $$(\bar{x},\bar{y})=\left(\frac{C_{2}\alpha_{1}}{C_{2}A_{1}+\gamma_{2}},\frac{\gamma_{2}}{C_{2}}\right).$$
Moreover, every solution is equal to the equilibrium after at most two steps.
\end{thm}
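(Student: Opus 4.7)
The proof will be essentially a one-line observation, mirroring the structure already used for systems (11,1) and (11,5): since the $y$-update is a constant independent of the previous state, the $y$-coordinate becomes equal to its equilibrium value after a single step, and then the $x$-coordinate locks onto its equilibrium value one step later.

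More concretely, the plan is as follows. First I would note that the equilibrium computation is immediate: setting $\bar y = \gamma_2/C_2$ from the second equation and substituting into $\bar x = \alpha_1/(A_1+\bar y)$ yields
$$\bar x = \frac{\alpha_1}{A_1 + \gamma_2/C_2} = \frac{C_2\alpha_1}{C_2 A_1 + \gamma_2},$$
which establishes uniqueness of the equilibrium asserted in the theorem.

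Next, for the dynamical claim, I would argue by direct iteration. Regardless of the choice of nonnegative initial conditions $(x_0,y_0)$, after one step we have
$$y_1 = \frac{\gamma_2}{C_2} = \bar y, \qquad x_1 = \frac{\alpha_1}{A_1 + y_0} \in \left(0,\tfrac{\alpha_1}{A_1}\right],$$
so $(x_1,y_1)$ already lies on the horizontal segment $\left(0,\alpha_1/A_1\right]\times\{\gamma_2/C_2\}$. Then at the next step,
$$x_2 = \frac{\alpha_1}{A_1 + y_1} = \frac{\alpha_1}{A_1 + \gamma_2/C_2} = \bar x, \qquad y_2 = \frac{\gamma_2}{C_2} = \bar y,$$
and by induction $(x_n,y_n) = (\bar x,\bar y)$ for every $n \geq 2$.

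There is no real obstacle here; the only thing to be careful about is the wording of ``at most two steps,'' which is sharp because a generic $y_0$ prevents $(x_1,y_1)$ from equaling the equilibrium while guaranteeing $(x_2,y_2) = (\bar x,\bar y)$. The argument is entirely parallel to the proofs given for (11,1) and (11,5), with $A_2$ (or $B_2$) replaced by $C_2$ and $\alpha_2$ (or $\beta_2$) replaced by $\gamma_2$.
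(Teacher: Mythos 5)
Your proposal is correct and follows essentially the same argument as the paper's proof: the paper likewise observes that $(x_1,y_1)$ lands on the segment $\left(0,\tfrac{\alpha_1}{A_1}\right]\times\{\tfrac{\gamma_2}{C_2}\}$ and that $(x_n,y_n)=(\bar{x},\bar{y})$ for all $n\geq 2$. Your version just spells out the iteration slightly more explicitly.
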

\begin{proof}
In this case we clearly have a unique equilibrium 
$$(\bar{x},\bar{y})=\left(\frac{C_{2}\alpha_{1}}{C_{2}A_{1}+\gamma_{2}},\frac{\gamma_{2}}{C_{2}}\right).$$
We have $(x_{1},y_{1})$ is on the line segment $\left(0,\frac{\alpha_{1}}{A_{1}}\right]\times\{\frac{\gamma_{2}}{C_{2}}\}$. Moreover 
$$(x_{n},y_{n})= (\bar{x},\bar{y})=\left(\frac{C_{2}\alpha_{1}}{C_{2}A_{1}+\gamma_{2}},\frac{\gamma_{2}}{C_{2}}\right),\quad n\geq 2.$$
\end{proof}
\section{The System (11,10)}
\begin{thm}
Consider the following system of rational difference equations
$$x_{n+1}=\frac{\alpha_{1}}{A_{1}+y_{n}},\quad n=0,1,\dots,$$
$$y_{n+1}=\frac{\alpha_{2}}{A_{2}+y_{n}},\quad n=0,1,\dots,$$
with $\alpha_1,A_{1},\alpha_{2},A_{2}>0$ and arbitrary nonnegative initial conditions so that the denominator is never zero.
In this case, there is a unique nonnegative equilibrium which is globally asymptotically stable.
\end{thm}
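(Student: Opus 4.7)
The key structural observation is that the $y$-equation decouples: it is a pure first-order Riccati recursion
$$y_{n+1}=\frac{\alpha_{2}}{A_{2}+y_{n}},$$
with no dependence on $x_{n}$. My plan is therefore to handle the $y$-component first and then recover the $x$-component by continuity of $y\mapsto \alpha_{1}/(A_{1}+y)$. First I would find the equilibrium by setting $\bar{y}(A_{2}+\bar{y})=\alpha_{2}$, extracting the unique nonnegative root $\bar{y}=(-A_{2}+\sqrt{A_{2}^{2}+4\alpha_{2}})/2$, and then defining $\bar{x}=\alpha_{1}/(A_{1}+\bar{y})$. The equilibrium is clearly the only nonnegative one since the other root of the quadratic is negative.

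For local asymptotic stability I would compute the Jacobian at $(\bar{x},\bar{y})$; it is upper triangular with eigenvalues $0$ and $-\bar{y}/(A_{2}+\bar{y})$, and the latter lies in $(-1,0)$, so the equilibrium is locally asymptotically stable by standard linearized-stability theory.

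For global attractivity of the $y$-iteration I would let $T(y)=\alpha_{2}/(A_{2}+y)$ and pass to the two-step map
$$T^{2}(y)=\frac{\alpha_{2}(A_{2}+y)}{A_{2}(A_{2}+y)+\alpha_{2}}=\frac{\alpha_{2}A_{2}+\alpha_{2}y}{A_{2}^{2}+A_{2}y+\alpha_{2}}.$$
This is an increasing Möbius transformation on $[0,\infty)$, and solving $T^{2}(y)=y$ reduces to the same quadratic $y^{2}+A_{2}y-\alpha_{2}=0$, so $\bar{y}$ is its unique nonnegative fixed point. Since $T^{2}$ is increasing and has a unique fixed point in $[0,\infty)$, the sign of $T^{2}(y)-y$ is determined by the sign of $\bar{y}-y$; consequently the sequence $\{y_{2n}\}$ is monotone and bounded and converges to $\bar{y}$. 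The same reasoning applied to $\{y_{2n+1}\}$, together with $y_{2n+1}=T(y_{2n})\to T(\bar{y})=\bar{y}$, shows $y_{n}\to \bar{y}$ for every nonnegative initial condition. Then $x_{n+1}=\alpha_{1}/(A_{1}+y_{n})\to \alpha_{1}/(A_{1}+\bar{y})=\bar{x}$ by continuity, giving global attractivity; combined with local asymptotic stability from the linearization, this yields global asymptotic stability.

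The only mildly delicate step is the global convergence of the Riccati iteration, since the one-step contraction estimate $|y_{n+1}-\bar{y}|=\bar{y}|y_{n}-\bar{y}|/(A_{2}+y_{n})$ does not give a uniform contraction when $y_{n}$ is close to $0$. I expect this is where the argument has to be done carefully, but the two-step monotonicity argument sketched above bypasses the issue cleanly. Everything else is direct algebra or immediate from the decoupled structure.
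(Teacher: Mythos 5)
Your proposal is correct, and the local analysis (unique nonnegative root of $\bar{y}^{2}+A_{2}\bar{y}-\alpha_{2}=0$, upper-triangular Jacobian with eigenvalues $0$ and $-\bar{y}/(A_{2}+\bar{y})$) coincides with the paper's. Where you diverge is the global step. The paper does not argue convergence of the $y$-iteration directly: it builds an explicit topological conjugacy $h(x,y)=\left(y,\frac{\alpha_{1}}{x}-A_{1}\right)$ between the planar map on $\left(0,\frac{\alpha_{1}}{A_{1}}\right)\times(0,\infty)$ and the Riccati equation $u_{n+1}=\frac{\alpha_{2}}{A_{2}+u_{n}}$, and then invokes the known global convergence theory for Riccati difference equations (Kulenovi\'c--Ladas) to conclude that every solution converges, which combined with local stability gives global asymptotic stability. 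You instead exploit the decoupling of the $y$-equation and prove convergence from scratch: the second-iterate map $T^{2}$ is an increasing M\"obius transformation whose fixed-point equation reduces to the same quadratic $y^{2}+A_{2}y-\alpha_{2}=0$ (so there are no prime period-two points), hence the even- and odd-indexed subsequences are monotone, bounded, and converge to $\bar{y}$, after which $x_{n}\to\bar{x}$ by continuity. Your route is self-contained and avoids citing external Riccati theory, and it correctly identifies and bypasses the failure of a uniform one-step contraction near $y=0$; the paper's route buys uniformity with the other cases in the article, where the conjugacy is genuinely needed because there $y_{n+1}$ depends on $x_{n}$, so the same template handles all systems at once. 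Both arguments are sound.
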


\begin{proof}
To find the equilibria we solve:
$$\bar{x}=\frac{\alpha_{1}}{A_{1}+\bar{y}},$$
$$\bar{y}=\frac{\alpha_{2}}{A_{2}+\bar{y}}.$$
So, $$A_{2}\bar{y}+\bar{y}^{2}-\alpha_{2}=0.$$
Thus, $$\bar{y}=\frac{-A_{2}+\sqrt{A_{2}^{2}+4\alpha_{2}}}{2}$$
and $$\bar{x}=\frac{2\alpha_{1}}{2A_{1}-A_{2}+\sqrt{A_{2}^{2}+4\alpha_{2}}}.$$

Descarte's rule of signs tells us that there is a unique nonnegative equilibrium.
Performing linearized stability analysis about the equilibrium $(\bar{x},\bar{y})$ we solve:
$$det\left(\begin{array}{cc}
-\lambda & \frac{-\bar{x}}{A_{1}+\bar{y}}\\               
0 & \frac{-\bar{y}}{A_{2}+\bar{y}}-\lambda\\ 
\end{array}
\right)= 0.$$
$$\lambda^{2}+\left(\frac{\bar{y}}{A_{2}+\bar{y}}\right)\lambda =0.$$
We have $\lambda_{1}=0$ and $\lambda_{2}=\frac{-\bar{y}}{A_{2}+\bar{y}}$. Thus our unique nonnegative equilibrium is locally asymptotically stable.
\par\vspace{.4 cm}
Now we will show that our rational system on $(0,\frac{\alpha_{1}}{A_{1}})\times (0,\infty)$ is topologically conjugate to the following second order rational difference equation on $(0,\infty)^{2}$,
$$u_{n+1}=\frac{\alpha_{2}}{A_{2} + u_{n}},\quad n=0,1,\dots.$$
Let $f:(0,\frac{\alpha_{1}}{A_{1}})\times (0,\infty)\rightarrow (0,\frac{\alpha_{1}}{A_{1}})\times (0,\infty)$ be 
$$f((x,y))=\left(\frac{\alpha_{1}}{A_{1}+y},\frac{\alpha_{2}}{A_{2}+y}\right).$$
Let $g: (0,\infty)^{2}\rightarrow (0,\infty)^{2}$ be
$$g((x,y))=\left(\frac{\alpha_{2}}{A_{2} + x},x\right).$$
Let $h:(0,\frac{\alpha_{1}}{A_{1}})\times (0,\infty)\rightarrow (0,\infty)^{2}$ be 
$$h((x,y))=\left(y,\frac{\alpha_{1}}{x}-A_{1}\right).$$
Let $h^{-1}:(0,\infty)^{2}\rightarrow (0,\frac{\alpha_{1}}{A_{1}})\times (0,\infty)$ be
 $$h^{-1}((x,y))=\left(\frac{\alpha_{1}}{A_{1}+y},x\right).$$
Now we will show that $h^{-1}\circ g \circ h = f$.
$$h^{-1}\circ g \circ h((x,y))=h^{-1}\circ g\left(\left(y,\frac{\alpha_{1}}{x}-A_{1}\right)\right)=h^{-1}\left(\left(\frac{\alpha_{2}}{A_{2}+y},y\right)\right)$$
$$= \left(\frac{\alpha_{1}}{A_{1}+y},\frac{\alpha_{2}}{A_{2}+y}\right)=f((x,y)).$$
Since our system (11,10) is conjugate to a Riccati difference equation we see that every solution converges to the unique nonnegative equilibrium. Thus, since we have already shown the unique nonnegative equilibrium to be locally asymptotically stable, the unique nonnegative equilibrium is globally asymptotically stable. 
\end{proof}

\section{The System (11,11)}
\begin{thm}
Consider the following system of rational difference equations
$$x_{n+1}=\frac{\alpha_{1}}{A_{1}+y_{n}},\quad n=0,1,\dots,$$
$$y_{n+1}=\frac{\alpha_{2}}{A_{2}+x_{n}},\quad n=0,1,\dots,$$
with $\alpha_1,A_{1},\alpha_{2},A_{2}>0$ and arbitrary nonnegative initial conditions so that the denominator is never zero.
In this case, the unique nonnegative equilibrium is globally asymptotically stable.
\end{thm}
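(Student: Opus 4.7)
The plan is to mirror the strategy used in the previous sections, especially (11,10), by combining a linearization argument with a topological conjugacy to a Riccati-type equation. I would begin by locating the equilibrium: substituting $\bar{x}=\alpha_{1}/(A_{1}+\bar{y})$ into $\bar{y}(A_{2}+\bar{x})=\alpha_{2}$ yields a quadratic in $\bar{y}$ whose coefficients (expressed as a polynomial with one sign change) allow Descartes' rule of signs to guarantee a unique positive root, and hence a unique nonnegative equilibrium $(\bar{x},\bar{y})$.

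Next I would perform the linearized stability analysis. The Jacobian at $(\bar{x},\bar{y})$ has zero diagonal entries and off-diagonal entries $-\bar{x}/(A_{1}+\bar{y})$ and $-\bar{y}/(A_{2}+\bar{x})$, so the characteristic equation is $\lambda^{2}=\bar{x}\bar{y}/\bigl((A_{1}+\bar{y})(A_{2}+\bar{x})\bigr)$. Using the equilibrium identities $(A_{1}+\bar{y})\bar{x}=\alpha_{1}$ and $(A_{2}+\bar{x})\bar{y}=\alpha_{2}$, this simplifies to $\lambda^{2}=(\bar{x}\bar{y})^{2}/(\alpha_{1}\alpha_{2})$. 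Since $\bar{x}\bar{y}=\alpha_{1}\bar{y}/(A_{1}+\bar{y})<\alpha_{1}$ and symmetrically $\bar{x}\bar{y}<\alpha_{2}$, we obtain $|\lambda|<1$, giving local asymptotic stability.

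For the global behavior I would re-use the change of variables $h(x,y)=\bigl(y,\alpha_{1}/x-A_{1}\bigr)$ introduced for (11,3), (11,7), and (11,10), with inverse $h^{-1}(u,v)=\bigl(\alpha_{1}/(A_{1}+v),u\bigr)$. A direct computation of $g:=h\circ f\circ h^{-1}$ produces
$$g(u,v)=\left(\frac{\alpha_{2}(A_{1}+v)}{A_{2}(A_{1}+v)+\alpha_{1}},\;u\right),$$
so on the $(u,v)$-side the orbit satisfies $u_{n+1}=\phi(v_{n})$ and $v_{n+1}=u_{n}$, where $\phi(w)=(\alpha_{2}A_{1}+\alpha_{2}w)/(A_{2}A_{1}+\alpha_{1}+A_{2}w)$. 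Iterating once gives $u_{n+1}=\phi(u_{n-1})$, which decouples the even- and odd-indexed subsequences into independent Riccati iterations with strictly positive coefficients.

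The main obstacle, as in the second-order reductions for (11,3) and (11,7), is to verify that both subsequences converge to the same limit rather than settling into a genuine 2-cycle. This is handled by the standard Riccati theory: the map $\phi$ has a unique positive fixed point and every positive orbit of $w_{k+1}=\phi(w_{k})$ converges monotonically (or in an alternating monotone fashion) to it. Hence both the even and odd subsequences of $(u_{n})$ converge to this common fixed point, so $(u_{n},v_{n})$ converges under $g$; transporting back through $h^{-1}$ and combining with the local asymptotic stability established above yields that the unique nonnegative equilibrium of (11,11) is globally asymptotically stable.
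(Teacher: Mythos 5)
Your proposal is correct and follows essentially the same route as the paper: Descartes' rule of signs for uniqueness of the equilibrium, the same linearization (your simplification $\lambda^{2}=(\bar{x}\bar{y})^{2}/(\alpha_{1}\alpha_{2})<1$ is a slightly more explicit version of the paper's eigenvalue bound), and the same conjugacy via $h(x,y)=(y,\alpha_{1}/x-A_{1})$ to the second-order equation $u_{n+1}=\phi(u_{n-1})$, which decouples into even- and odd-indexed Riccati iterations whose common positive fixed point gives global attractivity. The only cosmetic difference is that you compute $g=h\circ f\circ h^{-1}$ directly and spell out the Riccati convergence of the two subsequences, which the paper leaves to the cited Riccati theory.
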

\begin{proof}
To find the equilibria we solve:
$$\bar{x}=\frac{\alpha_{1}}{A_{1}+\bar{y}},$$
$$\bar{y}=\frac{\alpha_{2}}{A_{2}+\bar{x}}.$$
So,
$$A_{2}\bar{y}-\alpha_{2}=A_{1}\bar{x}-\alpha_{1},$$
$$A_{2}\bar{y}+\bar{x}\bar{y}=A_{1}\bar{x}+\alpha_{2}-\alpha_{1}+\frac{A_{1}\bar{x}^{2}+(\alpha_{2}-\alpha_{1})\bar{x}}{A_{2}},$$
$$0=A_{1}\bar{x}-\alpha_{1}+\frac{A_{1}\bar{x}^{2}+(\alpha_{2}-\alpha_{1})\bar{x}}{A_{2}},$$
$$\frac{A_{1}}{A_{2}}\bar{x}^{2}+\frac{A_{1}A_{2}+\alpha_{2}-\alpha_{1}}{A_{2}}\bar{x} -\alpha_{1}=0.$$
So, by Descarte's rule of signs we see that there is a unique nonnegative equilibrium $(\bar{x},\bar{y})$.
Performing linearized stability analysis about the equilibrium $(\bar{x},\bar{y})$ we solve:
$$det\left(\begin{array}{cc}
-\lambda & \frac{-\bar{x}}{A_{1}+\bar{y}}\\               
\frac{-\bar{y}}{A_{2}+\bar{x}} & -\lambda\\ 
\end{array}
\right)= 0.$$
$$\lambda^{2}- \frac{\bar{x}\bar{y}}{\left(A_{1}+\bar{y}\right)\left(A_{2}+\bar{x}\right)}=0.$$
So,
$$\lambda_{1,2}=\pm \sqrt{\frac{\bar{x}\bar{y}}{\left(A_{1}+\bar{y}\right)\left(A_{2}+\bar{x}\right)}}.$$
Thus our unique nonnegative equilibrium is locally asymptotically stable.
\par\vspace{.4 cm}
Now we will show that our rational system on $(0,\frac{\alpha_{1}}{A_{1}})\times (0,\infty)$ is topologically conjugate to the following second order rational difference equation on $(0,\infty)^{2}$,
$$u_{n+1}=\frac{\alpha_{2}}{A_{2}+\left(\frac{\alpha_{1}}{A_{1}+u_{n-1}}\right)},\quad n=0,1,\dots.$$
Let $f:(0,\frac{\alpha_{1}}{A_{1}})\times (0,\infty)\rightarrow (0,\frac{\alpha_{1}}{A_{1}})\times (0,\infty)$ be 
$$f((x,y))=\left(\frac{\alpha_{1}}{A_{1}+y},\frac{\alpha_{2}}{A_{2}+x}\right).$$
Let $g: (0,\infty)^{2}\rightarrow (0,\infty)^{2}$ be
$$g((x,y))=\left(\frac{\alpha_{2}}{A_{2}+ \left(\frac{\alpha_{1}}{A_{1}+y}\right) },x\right).$$
Let $h:(0,\frac{\alpha_{1}}{A_{1}})\times (0,\infty)\rightarrow (0,\infty)^{2}$ be 
$$h((x,y))=\left(y,\frac{\alpha_{1}}{x}-A_{1}\right).$$
Let $h^{-1}:(0,\infty)^{2}\rightarrow (0,\frac{\alpha_{1}}{A_{1}})\times (0,\infty)$ be
 $$h^{-1}((x,y))=\left(\frac{\alpha_{1}}{A_{1}+y},x\right).$$
Now we will show that $h^{-1}\circ g \circ h = f$.
$$h^{-1}\circ g \circ h((x,y))=h^{-1}\circ g\left(\left(y,\frac{\alpha_{1}}{x}-A_{1}\right)\right)=h^{-1}\left(\left(\frac{\alpha_{2}}{A_{2}+x},y\right)\right)$$
$$= \left(\frac{\alpha_{1}}{A_{1}+y},\frac{\alpha_{2}}{A_{2}+x}\right)=f((x,y)).$$
\par\vspace{.4 cm}
Since the system (11,11) is conjugate to a second order rational difference equation which decouples into two Riccati difference equations with Riccati number different from zero, the unique nonnegative equilibrium is globally asymptotically stable for the system (11,11).
\end{proof}

\section{The System (11,13)}
\begin{thm}
Consider the following system of rational difference equations
\par\vspace{.6 cm}
$$x_{n+1}=\frac{\alpha_{1}}{A_{1}+y_{n}},\quad n=0,1,\dots,$$
$$y_{n+1}=\frac{y_{n}}{A_{2}+y_{n}},\quad n=0,1,\dots,$$
\par\vspace{.4 cm}
with $\alpha_1,A_{1},A_{2}>0$ and arbitrary nonnegative initial conditions so that the denominator is never zero.
For this system of rational difference equations there are 2 regions in parametric space with distinct global behavior. The behavior is as follows:
\begin{enumerate}
\item If $A_{2}\geq 1$ the unique equilibrium $(\frac{\alpha_{1}}{A_{1}},0)$ is globally asymptotically stable. 
\item If $A_{2}< 1$, then there are exactly two distinct equilibria. The equilibrium $\left(\frac{\alpha_{1}}{A_{1}},0\right)$ is a saddle with $[0,\infty)\times\{0\}$ as its stable manifold. Moreover, the equilibrium $(\frac{\alpha_{1}}{A_{1}+1-A_{2}},1-A_{2})$ is locally asymptotically stable with attracting basin $[0,\infty)\times(0,\infty)$.
\end{enumerate}

\end{thm}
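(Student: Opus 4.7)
The key structural observation is that the $y$--equation decouples entirely from $x$: the recursion $y_{n+1}=y_{n}/(A_{2}+y_{n})$ is a scalar Riccati equation, so one can first determine the global behavior of $\{y_n\}$ on its own and then feed this information into $x_{n+1}=\alpha_{1}/(A_{1}+y_{n})$. My first step is to find the equilibria. The $y$--equilibria satisfy $\bar y(A_{2}+\bar y)=\bar y$, giving $\bar y=0$ and $\bar y=1-A_{2}$; the second root is nonnegative precisely when $A_{2}\le 1$, and coincides with the first when $A_{2}=1$. Substituting into $\bar x=\alpha_{1}/(A_{1}+\bar y)$ yields exactly the equilibria listed in the theorem.

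Next I would carry out the linearized stability analysis. Because neither update map depends on $x_{n}$, the Jacobian is upper triangular with eigenvalues $\lambda_{1}=0$ and $\lambda_{2}=A_{2}/(A_{2}+\bar y)^{2}$. At $(\alpha_{1}/A_{1},0)$ this gives $\lambda_{2}=1/A_{2}$, so this equilibrium is locally asymptotically stable when $A_{2}>1$ and a saddle when $A_{2}<1$; at the positive equilibrium $(\alpha_{1}/(A_{1}+1-A_{2}),1-A_{2})$ (when $A_{2}<1$) we get $\lambda_{2}=A_{2}<1$, hence local asymptotic stability. I would also observe directly that $y_{0}=0$ forces $y_{n}=0$ for all $n$ and $x_{n}=\alpha_{1}/A_{1}$ for $n\ge 1$, so the invariant line $[0,\infty)\times\{0\}$ is mapped to $(\alpha_{1}/A_{1},0)$ in a single step; this identifies the stable manifold in the saddle case.

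For the global behavior when $y_{0}>0$, I would analyze the scalar Riccati map $\phi(y)=y/(A_{2}+y)$ directly. This map is increasing on $[0,\infty)$, and a short computation shows $\phi(y)-y=y(1-A_{2}-y)/(A_{2}+y)$, so $\phi(y)<y$ whenever $y>\max\{0,1-A_{2}\}$ and $\phi(y)>y$ whenever $0<y<1-A_{2}$. When $A_{2}\ge 1$ this forces every positive orbit to be strictly decreasing and bounded below by $0$, so $y_{n}\to 0$; when $A_{2}<1$ the orbit is monotone (decreasing above $1-A_{2}$, increasing below it) and bounded by the two equilibria, so $y_{n}\to 1-A_{2}$ for every $y_{0}>0$. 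By continuity of $y\mapsto \alpha_{1}/(A_{1}+y)$, the $x$--coordinate then converges to $\alpha_{1}/(A_{1}+\bar y)$, which establishes global asymptotic stability in case (1) and identifies the attracting basin $[0,\infty)\times(0,\infty)$ in case (2).

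The only mildly delicate point is the monotone convergence of the scalar Riccati at the boundary value $A_{2}=1$, where the linearization has modulus one and the rate of approach to $0$ is sublinear; this is handled purely by the monotone bounded sequence argument above, since the sign analysis of $\phi(y)-y$ does not require $A_{2}\ne 1$. Everything else is direct computation, so I would expect the write-up to mirror the style of the preceding sections: state the equilibria, record the $2\times 2$ Jacobian, and then reduce the global question to the known Riccati dynamics.
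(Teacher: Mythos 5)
Your proposal is correct, and it reaches the conclusion by a more self-contained route than the paper. The paper's proof establishes a topological conjugacy $h((x,y))=(y,\tfrac{\alpha_{1}}{x}-A_{1})$ between the system and the second-order equation $u_{n+1}=\frac{u_{n}}{A_{2}+u_{n}}$, and then simply invokes the known global behavior of this Riccati equation ("from the conjugacy we see\dots"); its linearization records the same eigenvalues you found, just written as $\lambda_{2}=\frac{1-\bar y}{A_{2}+\bar y}$, which agrees with your $\frac{A_{2}}{(A_{2}+\bar y)^{2}}$ at both equilibria. You instead exploit the fact that the $y$-equation is autonomous (the same observation the paper uses for systems $(11,20)$ and $(11,28)$, though not here), and you prove the scalar convergence from scratch via the sign of $\phi(y)-y=\frac{y(1-A_{2}-y)}{A_{2}+y}$ and monotone bounded convergence, then transfer it to $x_{n}$ by continuity. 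What your approach buys is independence from citing Riccati theory, an explicit treatment of the nonhyperbolic boundary case $A_{2}=1$ (where your monotone argument also supplies the Lyapunov stability needed for "globally asymptotically stable," since decreasing orbits in $y$ keep $(x_{n},y_{n})$ trapped near the equilibrium), and a transparent identification of the stable manifold and of the basin $[0,\infty)\times(0,\infty)$; what the paper's conjugacy framework buys is uniformity, since the identical change of variables is reused verbatim across all the cases in the article.
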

\begin{proof}
To find the equilibria we solve:
$$\bar{x}=\frac{\alpha_{1}}{A_{1}+\bar{y}},$$
$$\bar{y}=\frac{\bar{y}}{A_{2}+\bar{y}}.$$
So, $(\bar{x}_{1},\bar{y}_{1})=(\frac{\alpha_{1}}{A_{1}},0)$ and if $A_{2}<1$ and $\bar{y}\neq 0$, then 
$$(\bar{x}_{2},\bar{y}_{2})=\left(\frac{\alpha_{1}}{A_{1}+1-A_{2}},1-A_{2}\right).$$
\par\vspace{.4 cm}

Performing linearized stability analysis about the equilibrium $(\bar{x},\bar{y})$ we solve:
$$det\left(\begin{array}{cc}
-\lambda & \frac{-\bar{x}}{A_{1}+\bar{y}}\\               
0 & \frac{1-\bar{y}}{A_{2}+\bar{y}}-\lambda\\ 
\end{array}
\right)= 0.$$
$$\lambda^{2}-\left(\frac{1-\bar{y}}{A_{2}+\bar{y}}\right)\lambda =0.$$
So,
$$\lambda_{1}=0,\quad \lambda_{2}= \frac{1-\bar{y}}{A_{2}+\bar{y}}$$
So, if $A_{2}<1$, then the equilibrium $(\frac{\alpha_{1}}{A_{1}},0)$ is a saddle and the equilibrium $(\frac{\alpha_{1}}{A_{1}+1-A_{2}},1-A_{2})$ is locally asymptotically stable.
If $A_{2}=1$, then the unique equilibrium $(\frac{\alpha_{1}}{A_{1}},0)$ is nonhyperbolic.
If $A_{2}>1$, then the unique equilibrium $(\frac{\alpha_{1}}{A_{1}},0)$ is locally asymptotically stable.
Now we will show that our rational system on $(0,\frac{\alpha_{1}}{A_{1}})\times (0,\infty)$ is topologically conjugate to the following second order rational difference equation on $(0,\infty)^{2}$,
$$u_{n+1}=\frac{u_{n}}{A_{2}+ u_{n}},\quad n=0,1,\dots.$$
Let $f:(0,\frac{\alpha_{1}}{A_{1}})\times (0,\infty)\rightarrow (0,\frac{\alpha_{1}}{A_{1}})\times (0,\infty)$ be 
$$f((x,y))=\left(\frac{\alpha_{1}}{A_{1}+y},\frac{y}{A_{2}+y}\right).$$
Let $g: (0,\infty)^{2}\rightarrow (0,\infty)^{2}$ be
$$g((x,y))=\left(\frac{x}{A_{2}+ x},x\right).$$
Let $h:(0,\frac{\alpha_{1}}{A_{1}})\times (0,\infty)\rightarrow (0,\infty)^{2}$ be 
$$h((x,y))=\left(y,\frac{\alpha_{1}}{x}-A_{1}\right).$$
Let $h^{-1}:(0,\infty)^{2}\rightarrow (0,\frac{\alpha_{1}}{A_{1}})\times (0,\infty)$ be
 $$h^{-1}((x,y))=\left(\frac{\alpha_{1}}{A_{1}+y},x\right).$$
Now we will show that $h^{-1}\circ g \circ h = f$.
$$h^{-1}\circ g \circ h((x,y))=h^{-1}\circ g\left(\left(y,\frac{\alpha_{1}}{x}-A_{1}\right)\right)=h^{-1}\left(\left(\frac{y}{A_{2}+y},y\right)\right)$$
$$= \left(\frac{\alpha_{1}}{A_{1}+y},\frac{y}{A_{2}+y}\right)=f((x,y)).$$
\par\vspace{.4 cm}
From the conjugacy we see that for system (11,13) whenever $A_{2}\geq 1$, the unique equilibrium $(\frac{\alpha_{1}}{A_{1}},0)$ is globally asymptotically stable. 
Furthermore, whenever $A_{2}< 1$, the equilibrium $(\frac{\alpha_{1}}{A_{1}},0)$ is a saddle with $[0,\infty)\times\{0\}$ as its stable manifold. Also, whenever $A_{2}< 1$, the equilibrium $(\frac{\alpha_{1}}{A_{1}+1-A_{2}},1-A_{2})$ is locally asymptotically stable with attracting basin $[0,\infty)\times(0,\infty)$. 
\end{proof}

\section{The System (11,17)}
\begin{thm}
Consider the following system of rational difference equations
$$x_{n+1}=\frac{\alpha_{1}}{A_{1}+y_{n}},\quad n=0,1,\dots,$$
$$y_{n+1}=\frac{x_{n}}{A_{2}+x_{n}},\quad n=0,1,\dots,$$
with $\alpha_1,A_{1},A_{2}>0$ and arbitrary nonnegative initial conditions so that the denominator is never zero.
In this case, the unique nonnegative equilibrium
$$\bar{x}=\frac{\alpha_{1}-A_{1}A_{1}+\sqrt{\left(\alpha_{1}-A_{1}A_{1}\right)^{2}+4\alpha_{1}A_{2}+4\alpha_{1}A_{1}A_{2}}}{2+2A_{1}},$$
$$\bar{y}=\frac{2\alpha_{1}+2A_{1}\alpha_{1}}{\alpha_{1}-A_{1}A_{1}+\sqrt{\left(\alpha_{1}-A_{1}A_{1}\right)^{2}+4\alpha_{1}A_{2}+4\alpha_{1}A_{1}A_{2}}}-A_{1},$$
is globally asymptotically stable.
\end{thm}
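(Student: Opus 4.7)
The plan is to follow the three-step template used for the preceding Riccati-reducible cases, especially (11,10) and (11,11): find the equilibrium, verify local asymptotic stability by linearization, and then reduce to a first-order Riccati equation via a topological conjugacy.

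\textbf{Step 1 (equilibrium).} Substituting $\bar y=\bar x/(A_2+\bar x)$ into $\bar x(A_1+\bar y)=\alpha_1$ and clearing denominators gives the quadratic $(1+A_1)\bar x^2+(A_1A_2-\alpha_1)\bar x-\alpha_1A_2=0$. The constant term is strictly negative, so Descartes' rule of signs produces exactly one positive root, matching the closed form in the theorem; $\bar y$ is then recovered from $\bar y=\alpha_1/\bar x-A_1$.

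\textbf{Step 2 (linearization).} The Jacobian at $(\bar x,\bar y)$ is off-diagonal with entries $-\bar x/(A_1+\bar y)$ and $A_2/(A_2+\bar x)^2$, so the eigenvalues satisfy $\lambda^2=-A_2\bar x/[(A_1+\bar y)(A_2+\bar x)^2]$. Using the equilibrium identities $\bar x(A_1+\bar y)=\alpha_1$ and $\bar y(A_2+\bar x)=\bar x$ one simplifies $(A_1+\bar y)(A_2+\bar x)=\alpha_1/\bar y$, and hence $|\lambda|^2=A_2\bar y^2/\alpha_1$; eliminating $A_2\bar y^2$ via the Step~1 quadratic yields $|\lambda|^2=1-\bar y-A_1A_2\bar y/\alpha_1<1$, so the equilibrium is locally asymptotically stable.

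\textbf{Step 3 (conjugacy and global stability).} As in the neighbouring cases, take $h(x,y)=(y,\alpha_1/x-A_1)$ with inverse $h^{-1}(x,y)=(\alpha_1/(A_1+y),x)$, and define
\[
g(x,y)=\left(\frac{\alpha_1}{A_1A_2+\alpha_1+A_2y},\,x\right).
\]
A direct check yields $h^{-1}\circ g\circ h=f$, so the system is topologically conjugate to the second-order scalar recursion $u_{n+1}=\alpha_1/(A_1A_2+\alpha_1+A_2u_{n-1})$. Its even- and odd-indexed subsequences each satisfy the same first-order Riccati equation $v_{k+1}=\alpha_1/(A_1A_2+\alpha_1+A_2v_k)$, which has positive Riccati number and a unique positive globally attracting fixed point; both subsequences converge to this common limit, and pulling back through $h^{-1}$ gives $(x_n,y_n)\to(\bar x,\bar y)$.

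\textbf{Main obstacle.} The algebra in Steps~1--2 is routine. The one nontrivial point is ruling out a nontrivial period-two limit of $(u_n)$ in Step~3, since the even- and odd-indexed subsequences evolve independently; this is handled by the standard global attractivity of the unique positive fixed point of a positive-parameter Riccati map, which is essentially the same inequality already verified in Step~2.
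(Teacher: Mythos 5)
Your proposal is correct and follows essentially the same route as the paper: the same equilibrium quadratic, the same linearized-stability computation, and the same conjugacy $h(x,y)=\left(y,\tfrac{\alpha_1}{x}-A_1\right)$ reducing the system to a second-order equation whose even- and odd-indexed subsequences satisfy a first-order Riccati equation with a globally attracting positive fixed point. The only cosmetic differences are that you simplify the conjugated map and spell out the no-period-two argument that the paper subsumes under the phrase ``Riccati number different from zero'' (note that under the standard convention that number is negative here, not positive, but nonzero is all that is needed).
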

\begin{proof}
To find the equilibria we solve:
$$\bar{x}=\frac{\alpha_{1}}{A_{1}+\bar{y}},$$
$$\bar{y}=\frac{\bar{x}}{A_{2}+\bar{x}}.$$
So,
$$\bar{x}=\frac{\alpha_{1}A_{2}+\alpha_{1}\bar{x}}{A_{1}A_{2}+A_{1}\bar{x}+\bar{x}}.$$
Thus,
$$A_{1}A_{2}\bar{x}+(A_{1}+1)\bar{x}^{2}=\alpha_{1}A_{2}+\alpha_{1}\bar{x}.$$
Hence,
$$\bar{x}=\frac{\alpha_{1}-A_{1}A_{1}+\sqrt{\left(\alpha_{1}-A_{1}A_{1}\right)^{2}+4\alpha_{1}A_{2}+4\alpha_{1}A_{1}A_{2}}}{2+2A_{1}}.$$
So,
$$\bar{y}=\frac{2\alpha_{1}+2A_{1}\alpha_{1}}{\alpha_{1}-A_{1}A_{1}+\sqrt{\left(\alpha_{1}-A_{1}A_{1}\right)^{2}+4\alpha_{1}A_{2}+4\alpha_{1}A_{1}A_{2}}}-A_{1}.$$
\par\vspace{.4 cm}
Performing linearized stability analysis about the equilibrium $(\bar{x},\bar{y})$ we solve:
$$det\left(\begin{array}{cc}
-\lambda & \frac{-\bar{x}}{A_{1}+\bar{y}}\\               
\frac{1-\bar{y}}{A_{2}+\bar{x}} & -\lambda\\ 
\end{array}
\right)= 0.$$
$$\lambda^{2}- \frac{\bar{x}\bar{y}-\bar{x}}{\left(A_{1}+\bar{y}\right)\left(A_{2}+\bar{x}\right)}=0.$$
Since $\bar{x}\bar{y}=\bar{x}-A_{2}\bar{y}$ from the equilibrium equations, we get
$$\lambda^{2}+ \frac{A_{2}\bar{y}}{\left(A_{1}+\bar{y}\right)\left(A_{2}+\bar{x}\right)}=0.$$
Thus both roots lie inside the unit disk and our unique equilibrium, $(\bar{x},\bar{y})$, is locally asymptotically stable.
\par\vspace{.4 cm}
Now we will show that our rational system on $(0,\frac{\alpha_{1}}{A_{1}})\times (0,\infty)$ is topologically conjugate to the following second order rational difference equation on $(0,\infty)^{2}$,
$$u_{n+1}=\frac{\left(\frac{\alpha_{1}}{A_{1}+u_{n-1}}\right)}{A_{2}+ \left(\frac{\alpha_{1}}{A_{1}+u_{n-1}}\right) },\quad n=0,1,\dots.$$
Let $f:(0,\frac{\alpha_{1}}{A_{1}})\times (0,\infty)\rightarrow (0,\frac{\alpha_{1}}{A_{1}})\times (0,\infty)$ be 
$$f((x,y))=\left(\frac{\alpha_{1}}{A_{1}+y},\frac{x}{A_{2}+x}\right).$$
Let $g: (0,\infty)^{2}\rightarrow (0,\infty)^{2}$ be
$$g((x,y))=\left(\frac{\left(\frac{\alpha_{1}}{A_{1}+y}\right) }{A_{2}+ \left(\frac{\alpha_{1}}{A_{1}+y}\right) },x\right).$$
Let $h:(0,\frac{\alpha_{1}}{A_{1}})\times (0,\infty)\rightarrow (0,\infty)^{2}$ be 
$$h((x,y))=\left(y,\frac{\alpha_{1}}{x}-A_{1}\right).$$
Let $h^{-1}:(0,\infty)^{2}\rightarrow (0,\frac{\alpha_{1}}{A_{1}})\times (0,\infty)$ be
 $$h^{-1}((x,y))=\left(\frac{\alpha_{1}}{A_{1}+y},x\right).$$
Now we will show that $h^{-1}\circ g \circ h = f$.
$$h^{-1}\circ g \circ h((x,y))=h^{-1}\circ g\left(\left(y,\frac{\alpha_{1}}{x}-A_{1}\right)\right)=h^{-1}\left(\left(\frac{x}{A_{2}+x},y\right)\right)$$
$$= \left(\frac{\alpha_{1}}{A_{1}+y},\frac{x}{A_{2}+x}\right)=f((x,y)).$$
\par\vspace{.4 cm}
Since the system (11,17) is conjugate to a second order rational difference equation which decouples into two Riccati difference equations with Riccati number different from zero, the unique nonnegative equilibrium is globally asymptotically stable for the system (11,17).
\end{proof}

\section{The System (11,19)}
\begin{thm}
Consider the following system of rational difference equations
$$x_{n+1}=\frac{\alpha_{1}}{A_{1}+y_{n}},\quad n=0,1,\dots,$$
$$y_{n+1}=\alpha_{2}+\gamma_{2}y_{n},\quad n=0,1,\dots,$$
with $\alpha_1,A_{1},\alpha_{2},\gamma_{2}>0$ and arbitrary nonnegative initial conditions so that the denominator is never zero.
For this system of rational difference equations there are 2 regions in parametric space with distinct global behavior. The behavior is as follows:
\begin{enumerate}
\item If $\gamma_{2}<1$, then the unique equilibrium $$\left(\frac{\alpha_{1}-\alpha_{1}\gamma_{2}}{A_{1}-A_{1}\gamma_{2}+\alpha_{2}},\frac{\alpha_{2}}{1-\gamma_{2}}\right),$$ is globally asymptotically stable.
\item If $\gamma_{2}\geq 1$, then for an arbitrary choice of initial conditions $$\lim_{n\rightarrow\infty}(x_{n},y_{n})=\left(0,\infty\right).$$
\end{enumerate}
\end{thm}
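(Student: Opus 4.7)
The plan is to exploit the fact that the $y_n$ recurrence is \emph{decoupled} from $x_n$: $y_{n+1}=\alpha_{2}+\gamma_{2}y_{n}$ is a first-order affine recurrence whose behavior is completely elementary, and once the asymptotics of $\{y_n\}$ are known, the behavior of $\{x_n\}$ can be read off from $x_{n+1}=\alpha_{1}/(A_{1}+y_{n})$ by continuity.

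First I would solve $\bar{y}=\alpha_{2}+\gamma_{2}\bar{y}$. For $\gamma_{2}\neq 1$ this gives $\bar{y}=\alpha_{2}/(1-\gamma_{2})$, which is nonnegative precisely when $\gamma_{2}<1$; the formula $\bar{x}=\alpha_{1}/(A_{1}+\bar{y})$ then simplifies to $(\alpha_{1}-\alpha_{1}\gamma_{2})/(A_{1}-A_{1}\gamma_{2}+\alpha_{2})$, matching the statement. For $\gamma_{2}=1$ the equilibrium equation becomes $\alpha_{2}=0$, which has no solution since $\alpha_{2}>0$; for $\gamma_{2}>1$ the formula gives $\bar{y}<0$. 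Hence no nonnegative equilibrium exists when $\gamma_{2}\geq 1$.

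For case (1), when $\gamma_{2}<1$, I would use the closed-form solution $y_{n}=\gamma_{2}^{n}(y_{0}-\bar{y})+\bar{y}$, which converges geometrically to $\bar{y}$; continuity of $t\mapsto \alpha_{1}/(A_{1}+t)$ then yields $x_{n+1}\to \alpha_{1}/(A_{1}+\bar{y})=\bar{x}$. A brief linearization check — the Jacobian at $(\bar{x},\bar{y})$ is upper triangular with diagonal entries $0$ and $\gamma_{2}$, both inside the open unit disk — confirms local asymptotic stability, and combined with the convergence just established this gives global asymptotic stability.

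For case (2), when $\gamma_{2}\geq 1$, the explicit formula shows $y_{n}\to\infty$ for every choice of $y_{0}\geq 0$: geometric divergence when $\gamma_{2}>1$, and the arithmetic progression $y_{n}=y_{0}+n\alpha_{2}\to\infty$ when $\gamma_{2}=1$ (using $\alpha_{2}>0$). Therefore $x_{n+1}=\alpha_{1}/(A_{1}+y_{n})\to 0$, so $(x_{n},y_{n})\to (0,\infty)$. There is no substantive obstacle here — the entire theorem reduces to elementary facts about a first-order affine recurrence — so the only real care needed is the separate bookkeeping for the borderline case $\gamma_{2}=1$, where $\alpha_{2}>0$ is essential for the divergence of $y_n$.
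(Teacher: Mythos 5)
Your proposal is correct. The paper's proof rests on the same key fact you use --- the $y$-equation is decoupled --- but packages it differently: after the equilibrium and linearization computations (which match yours: the Jacobian is triangular with eigenvalues $0$ and $\gamma_{2}$), it exhibits a topological conjugacy $h((x,y))=\left(y,\tfrac{\alpha_{1}}{x}-A_{1}\right)$ between the planar map on $(0,\tfrac{\alpha_{1}}{A_{1}})\times(0,\infty)$ and the first-order linear equation $u_{n+1}=\alpha_{2}+\gamma_{2}u_{n}$, and then quotes the known global behavior of that linear equation, handling $\gamma_{2}\geq 1$ directly (``clearly $y_{n}\to\infty$, $x_{n}\to 0$''), just as you do. You instead write down the explicit solution $y_{n}=\gamma_{2}^{n}(y_{0}-\bar{y})+\bar{y}$ (respectively $y_{n}=y_{0}+n\alpha_{2}$ when $\gamma_{2}=1$), take limits, and feed the result into $x_{n+1}=\alpha_{1}/(A_{1}+y_{n})$ by continuity; combined with local asymptotic stability from the linearization this gives global asymptotic stability for $\gamma_{2}<1$, and divergence $(x_{n},y_{n})\to(0,\infty)$ for $\gamma_{2}\geq 1$. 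Your route is more elementary and self-contained for this particular case, and it also handles arbitrary nonnegative initial conditions without restricting to the open rectangle on which the paper's conjugacy is defined; the paper's conjugacy formulation is the uniform template it applies to all the Riccati-reducible cases in the series, which is its main advantage, but for (11,19) it adds nothing beyond your direct computation. No gaps: your separate treatment of the borderline case $\gamma_{2}=1$, where $\alpha_{2}>0$ forces $y_{n}\to\infty$, is exactly the care the statement requires.
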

\begin{proof}
If $\gamma_{2}\geq 1$, then clearly $\lim_{n\rightarrow\infty}y_{n}=\infty$ and $\lim_{n\rightarrow\infty}x_{n}=0$.
If $\gamma_{2}< 1$, then we find the equilibrium by solving the following system of equations:
$$\bar{x}=\frac{\alpha_{1}}{A_{1}+\bar{y}},$$
$$\bar{y}=\alpha_{2}+\gamma_{2}\bar{y}.$$
So,
$$\bar{y}=\frac{\alpha_{2}}{1-\gamma_{2}},$$
$$\bar{x}=\frac{\alpha_{1}-\alpha_{1}\gamma_{2}}{A_{1}-A_{1}\gamma_{2}+\alpha_{2}}.$$
\par\vspace{.4 cm}

Performing linearized stability analysis about the equilibrium $(\bar{x},\bar{y})$ we solve:
$$det\left(\begin{array}{cc}
-\lambda & \frac{-\bar{x}}{A_{1}+\bar{y}}\\               
0 & \gamma_{2}-\lambda\\ 
\end{array}
\right)= 0.$$
$$\lambda^{2}-\gamma_{2}\lambda=0.$$
So, in the case $\gamma_{2} < 1$, the unique equilibrium,
$$\left(\frac{\alpha_{1}-\alpha_{1}\gamma_{2}}{A_{1}-A_{1}\gamma_{2}+\alpha_{2}},\frac{\alpha_{2}}{1-\gamma_{2}}\right),$$
is locally asymptotically stable.
\par\vspace{.4 cm}
Now, we will show that our rational system on $(0,\frac{\alpha_{1}}{A_{1}})\times (0,\infty)$ is topologically conjugate to the following second order rational difference equation on $(0,\infty)^{2}$,
$$u_{n+1}=\alpha_{2}+ \gamma_{2}u_{n},\quad n=0,1,\dots.$$
Let $f:(0,\frac{\alpha_{1}}{A_{1}})\times (0,\infty)\rightarrow (0,\frac{\alpha_{1}}{A_{1}})\times (0,\infty)$ be 
$$f((x,y))=\left(\frac{\alpha_{1}}{A_{1}+y},\alpha_{2}+\gamma_{2}y\right).$$
Let $g: (0,\infty)^{2}\rightarrow (0,\infty)^{2}$ be
$$g((x,y))=\left(\alpha_{2}+ \gamma_{2}x,x\right).$$
Let $h:(0,\frac{\alpha_{1}}{A_{1}})\times (0,\infty)\rightarrow (0,\infty)^{2}$ be 
$$h((x,y))=\left(y,\frac{\alpha_{1}}{x}-A_{1}\right).$$
Let $h^{-1}:(0,\infty)^{2}\rightarrow (0,\frac{\alpha_{1}}{A_{1}})\times (0,\infty)$ be
 $$h^{-1}((x,y))=\left(\frac{\alpha_{1}}{A_{1}+y},x\right).$$
Now we will show that $h^{-1}\circ g \circ h = f$.
$$h^{-1}\circ g \circ h((x,y))=h^{-1}\circ g\left(\left(y,\frac{\alpha_{1}}{x}-A_{1}\right)\right)=h^{-1}\left(\left(\alpha_{2}+\gamma_{2}y,y\right)\right)$$
$$= \left(\frac{\alpha_{1}}{A_{1}+y},\alpha_{2}+\gamma_{2}y\right)=f((x,y)).$$
Since our system (11,19) is conjugate to a first order linear difference equation, we get that when $\gamma_{2}<1$ the unique equilibrium is globally asymptotically stable. Furthermore, when
$\gamma_{2}\geq 1$ for our system (11,19), then $\lim_{n\rightarrow\infty}y_{n}=\infty$ and $\lim_{n\rightarrow\infty}x_{n}=0$.
\end{proof}

\section{The System (11,20)}
\begin{thm}
Consider the following system of rational difference equations
$$x_{n+1}=\frac{\alpha_{1}}{A_{1}+y_{n}},\quad n=0,1,\dots,$$
$$y_{n+1}=\frac{\alpha_{2}+\gamma_{2}y_{n}}{y_{n}},\quad n=0,1,\dots,$$
with $\alpha_1,A_{1},\alpha_{2},\gamma_{2}>0$ and arbitrary nonnegative initial conditions so that the denominator is never zero.
In this case, the unique equilibrium
$$\left(\frac{2\alpha_{1}}{2A_{1}+\gamma_{2}+\sqrt{\gamma_{2}^{2}+4\alpha_{2}}},\frac{\gamma_{2}+\sqrt{\gamma_{2}^{2}+4\alpha_{2}}}{2}\right),$$
is globally asymptotically stable.
\end{thm}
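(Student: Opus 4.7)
The structure of system $(11,20)$ is unusually clean: since $y_{n+1}=(\alpha_{2}+\gamma_{2}y_{n})/y_{n}=\gamma_{2}+\alpha_{2}/y_{n}$, the $y$-dynamics decouple completely from $x_{n}$ and reduce to a scalar Riccati difference equation. The plan is to exploit this decoupling: once global convergence $y_{n}\to\bar{y}$ is established for the scalar equation, the convergence $x_{n}\to\bar{x}$ follows immediately from continuity of $y\mapsto\alpha_{1}/(A_{1}+y)$.

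First I would locate the equilibrium by solving $\bar{y}=\gamma_{2}+\alpha_{2}/\bar{y}$, i.e.\ $\bar{y}^{2}-\gamma_{2}\bar{y}-\alpha_{2}=0$, whose unique positive root is $\bar{y}=(\gamma_{2}+\sqrt{\gamma_{2}^{2}+4\alpha_{2}})/2$; then $\bar{x}=\alpha_{1}/(A_{1}+\bar{y})=2\alpha_{1}/(2A_{1}+\gamma_{2}+\sqrt{\gamma_{2}^{2}+4\alpha_{2}})$, matching the stated equilibrium. Next I would perform the linearized stability analysis. Since $\partial y_{n+1}/\partial x_{n}=0$, the Jacobian at $(\bar{x},\bar{y})$ is upper triangular with diagonal entries $0$ and $-\alpha_{2}/\bar{y}^{2}$, so the characteristic polynomial factors as $\lambda^{2}+(\alpha_{2}/\bar{y}^{2})\lambda=0$. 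Using the equilibrium relation $\bar{y}^{2}=\alpha_{2}+\gamma_{2}\bar{y}>\alpha_{2}$, both eigenvalues lie strictly inside the unit disk, which yields local asymptotic stability.

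For the global result I would follow the topological conjugacy template used throughout the earlier sections. Define
$$f(x,y)=\left(\frac{\alpha_{1}}{A_{1}+y},\,\frac{\alpha_{2}+\gamma_{2}y}{y}\right),\quad h(x,y)=\left(y,\,\frac{\alpha_{1}}{x}-A_{1}\right),\quad g(x,y)=\left(\gamma_{2}+\frac{\alpha_{2}}{x},\,x\right),$$
together with $h^{-1}(x,y)=(\alpha_{1}/(A_{1}+y),x)$; a direct check gives $h^{-1}\circ g\circ h=f$, so $f$ on $(0,\alpha_{1}/A_{1})\times(0,\infty)$ is topologically conjugate to $g$ on $(0,\infty)^{2}$. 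The iteration under $g$ is driven entirely by the scalar Riccati recurrence $u_{n+1}=\gamma_{2}+\alpha_{2}/u_{n}$, whose unique positive fixed point $\bar{y}$ attracts every positive initial condition by the standard theory of Riccati difference equations (the Riccati number is nonzero because $\alpha_{2}>0$). Combined with the local asymptotic stability established above, this gives global asymptotic stability of the equilibrium of system $(11,20)$. I do not anticipate any serious obstacle: the only delicate points are verifying the conjugacy identity and invoking the correct scalar Riccati convergence result, both of which are routine in this parameter regime.
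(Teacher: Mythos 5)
Your proposal is correct and takes essentially the same route as the paper: the same equilibrium computation, an equivalent linearization (note $(\gamma_{2}-\bar{y})/\bar{y}=-\alpha_{2}/\bar{y}^{2}$, so your characteristic equation is the paper's), and global convergence deduced from the fact that the $y$-equation is a self-contained scalar Riccati recurrence whose positive fixed point attracts all positive solutions, after which $x_{n}\to\bar{x}$ by continuity. The only difference is cosmetic: because the $y$-dynamics already decouple, the paper omits the $h,g,h^{-1}$ conjugacy construction for this case, so your conjugacy step is harmless but unnecessary.
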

\begin{proof}
To find the equilibria we solve:
$$\bar{x}=\frac{\alpha_{1}}{A_{1}+\bar{y}},$$
$$\bar{y}=\frac{\alpha_{2}+\gamma_{2}\bar{y}}{\bar{y}}.$$
So,
$$\bar{y}=\frac{\gamma_{2}+\sqrt{\gamma_{2}^{2}+4\alpha_{2}}}{2},$$
$$\bar{x}=\frac{2\alpha_{1}}{2A_{1}+\gamma_{2}+\sqrt{\gamma_{2}^{2}+4\alpha_{2}}}.$$
\par\vspace{.4 cm}
Performing linearized stability analysis about the equilibrium $(\bar{x},\bar{y})$ we solve:
$$det\left(\begin{array}{cc}
-\lambda & \frac{-\bar{x}}{A_{1}+\bar{y}}\\               
0 & \frac{\gamma_{2}-\bar{y}}{\bar{y}}-\lambda\\ 
\end{array}
\right)= 0.$$
$$\lambda^{2}-\left(\frac{\gamma_{2}-\bar{y}}{\bar{y}}\right)\lambda =0.$$
Since $0<\gamma_{2}< \bar{y}$, we have that $\left|\frac{\gamma_{2}}{\bar{y}}-1\right|<1$. Thus, the unique equilibrium,
$$\left(\frac{2\alpha_{1}}{2A_{1}+\gamma_{2}+\sqrt{\gamma_{2}^{2}+4\alpha_{2}}},\frac{\gamma_{2}+\sqrt{\gamma_{2}^{2}+4\alpha_{2}}}{2}\right),$$
is locally asymptotically stable.
\par\vspace{.4 cm}
Since the recursive equation governing $y_{n}$ is a Riccati equation which does not depend on $x_{n}$ in any way, the unique equilibrium is globally asymptotically stable for the system (11,20).
\end{proof}

\section{The System (11,22)}
\begin{thm}
Consider the following system of rational difference equations
$$x_{n+1}=\frac{\alpha_{1}}{A_{1}+y_{n}},\quad n=0,1,\dots,$$
$$y_{n+1}=\alpha_{2}+x_{n},\quad n=0,1,\dots,$$
with $\alpha_1,A_{1},\alpha_{2}>0$ and arbitrary nonnegative initial conditions so that the denominator is never zero.
In this case, the unique nonnegative equilibrium
$$\left(\frac{-\alpha_{2}-A_{1}+\sqrt{\left(\alpha_{2}+A_{1}\right)^{2}+4\alpha_{1}}}{2},\frac{\alpha_{2}-A_{1}+\sqrt{\left(\alpha_{2}+A_{1}\right)^{2}+4\alpha_{1}}}{2}\right)$$
is globally asymptotically stable.
\end{thm}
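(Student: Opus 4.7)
The plan is to follow the blueprint used repeatedly in the preceding sections: first locate the unique nonnegative equilibrium by direct substitution, next check local asymptotic stability by linearization, and finally exhibit a topological conjugacy of the two-dimensional map to a scalar second-order recurrence that decouples into two independent Riccati iterations, from which global asymptotic stability follows.

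Equating $\bar{x}=\alpha_{1}/(A_{1}+\bar{y})$ with $\bar{y}=\alpha_{2}+\bar{x}$ reduces to the quadratic $\bar{y}^{2}+(A_{1}-\alpha_{2})\bar{y}-(\alpha_{2}A_{1}+\alpha_{1})=0$, whose discriminant simplifies to $(A_{1}+\alpha_{2})^{2}+4\alpha_{1}$; the unique nonnegative root gives the $\bar{y}$ and hence $\bar{x}$ asserted in the theorem. The Jacobian at this equilibrium has characteristic polynomial $\lambda^{2}+\bar{x}/(A_{1}+\bar{y})=0$, and because $A_{1}+\bar{y}=A_{1}+\alpha_{2}+\bar{x}>\bar{x}>0$, both roots lie strictly inside the unit disk, so the equilibrium is locally asymptotically stable.

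For global stability I will mimic the conjugacy gadget used in Sections (11,10), (11,11), (11,17), and (11,19): set $h(x,y)=(y,\alpha_{1}/x-A_{1})$ with inverse $h^{-1}(x,y)=(\alpha_{1}/(A_{1}+y),x)$, and let $g(x,y)=\bigl(\alpha_{2}+\alpha_{1}/(A_{1}+y),\,x\bigr)$. A direct computation, identical in style to the earlier sections, will verify $h^{-1}\circ g\circ h=f$ on $(0,\alpha_{1}/A_{1})\times(0,\infty)$, transferring the dynamics to the scalar second-order recurrence $u_{n+1}=\alpha_{2}+\alpha_{1}/(A_{1}+u_{n-1})$. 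This recurrence splits on even and odd indices into two independent iterations of the Riccati map $v\mapsto(\alpha_{2}A_{1}+\alpha_{1}+\alpha_{2}v)/(A_{1}+v)$, whose positive fixed point is precisely the $\bar{y}$ found above.

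The only remaining point, and the main obstacle, is to invoke the standard scalar Riccati theory to conclude that every orbit of this Riccati iteration converges to its unique positive equilibrium. Since the associated quadratic has strictly positive discriminant, the Riccati number is nonzero and the usual global convergence theorem applies, exactly as in Sections (11,11) and (11,17). Both the even-indexed and odd-indexed subsequences of $u_{n}$ therefore converge to $\bar{y}$, forcing $u_{n}\to\bar{y}$; pulling back through $h^{-1}$ yields $(x_{n},y_{n})\to(\bar{x},\bar{y})$, which combined with local asymptotic stability gives global asymptotic stability.
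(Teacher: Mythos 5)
Your proposal is correct and follows essentially the same route as the paper: the same equilibrium computation, the same linearization giving $\lambda^{2}+\bar{x}/(A_{1}+\alpha_{2}+\bar{x})=0$, and the same conjugacy $h^{-1}\circ g\circ h=f$ reducing the system to $u_{n+1}=\alpha_{2}+\alpha_{1}/(A_{1}+u_{n-1})$, which decouples into Riccati equations whose global convergence yields global asymptotic stability. The only cosmetic quibble is your justification that the Riccati number is nonzero: it comes from the nondegeneracy $\alpha_{2}A_{1}-(\alpha_{2}A_{1}+\alpha_{1})=-\alpha_{1}\neq 0$ rather than from the positivity of the discriminant of the equilibrium quadratic, but this does not affect the argument.
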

\begin{proof}
To find the equilibria we solve:
$$\bar{x}=\frac{\alpha_{1}}{A_{1}+\bar{y}},$$
$$\bar{y}=\alpha_{2}+\bar{x}.$$
So,
$$\bar{y}=\alpha_{2}+\frac{\alpha_{1}}{A_{1}+\bar{y}},$$
$$\bar{y}^{2}+A_{1}\bar{y}=\alpha_{2}A_{1}+\alpha_{2}\bar{y}+\alpha_{1}.$$
Thus,
$$\bar{y}=\frac{\alpha_{2}-A_{1}+\sqrt{\left(\alpha_{2}+A_{1}\right)^{2}+4\alpha_{1}}}{2},$$
$$\bar{x}=\frac{-\alpha_{2}-A_{1}+\sqrt{\left(\alpha_{2}+A_{1}\right)^{2}+4\alpha_{1}}}{2},$$
\par\vspace{.4 cm}
Performing linearized stability analysis about the equilibrium $(\bar{x},\bar{y})$ we solve:
$$det\left(\begin{array}{cc}
-\lambda & \frac{-\bar{x}}{A_{1}+\bar{y}}\\               
1 & -\lambda\\ 
\end{array}
\right)= 0.$$
$$\lambda^{2} + \frac{\bar{x}}{A_{1}+\bar{y}}=0.$$
Since $\bar{y}=\alpha_{2}+\bar{x}$, we have
$$\lambda^{2} + \frac{\bar{x}}{A_{1}+\alpha_{2}+\bar{x}}=0.$$
Thus, our unique nonnegative equilibrium is locally asymptotically stable.
\par\vspace{.4 cm}
Now we will show that our rational system on $(0,\frac{\alpha_{1}}{A_{1}})\times (0,\infty)$ is topologically conjugate to the following second order rational difference equation on $(0,\infty)^{2}$,
$$u_{n+1}=\alpha_{2}+ \left(\frac{\alpha_{1}}{A_{1}+u_{n-1}}\right) ,\quad n=0,1,\dots.$$
Let $f:(0,\frac{\alpha_{1}}{A_{1}})\times (0,\infty)\rightarrow (0,\frac{\alpha_{1}}{A_{1}})\times (0,\infty)$ be 
$$f((x,y))=\left(\frac{\alpha_{1}}{A_{1}+y},\alpha_{2}+x\right).$$
Let $g: (0,\infty)^{2}\rightarrow (0,\infty)^{2}$ be
$$g((x,y))=\left(\alpha_{2}+\left(\frac{\alpha_{1}}{A_{1}+y}\right) ,x\right).$$
Let $h:(0,\frac{\alpha_{1}}{A_{1}})\times (0,\infty)\rightarrow (0,\infty)^{2}$ be 
$$h((x,y))=\left(y,\frac{\alpha_{1}}{x}-A_{1}\right).$$
Let $h^{-1}:(0,\infty)^{2}\rightarrow (0,\frac{\alpha_{1}}{A_{1}})\times (0,\infty)$ be
 $$h^{-1}((x,y))=\left(\frac{\alpha_{1}}{A_{1}+y},x\right).$$
Now we will show that $h^{-1}\circ g \circ h = f$.
$$h^{-1}\circ g \circ h((x,y))=h^{-1}\circ g\left(\left(y,\frac{\alpha_{1}}{x}-A_{1}\right)\right)=h^{-1}\left(\left(\alpha_{2}+x,y\right)\right)$$
$$= \left(\frac{\alpha_{1}}{A_{1}+y},\alpha_{2}+x\right)=f((x,y)).$$
\par\vspace{.4 cm}
Since the system (11,22) is conjugate to a second order rational difference equation which decouples into two Riccati difference equations with Riccati number different from zero, the unique nonnegative equilibrium is globally asymptotically stable for the system (11,22).
\end{proof}

\section{The System (11,24)}
\begin{thm}
Consider the following system of rational difference equations
$$x_{n+1}=\frac{\alpha_{1}}{A_{1}+y_{n}},\quad n=0,1,\dots,$$
$$y_{n+1}=\frac{\alpha_{2}+x_{n}}{x_{n}},\quad n=0,1,\dots,$$
with $\alpha_1,A_{1},\alpha_{2}>0$ and arbitrary nonnegative initial conditions so that the denominator is never zero.
For this system of rational difference equations there are 2 regions in parametric space with distinct global behavior. The behavior is as follows:
\begin{enumerate}
\item If $\alpha_{1}> \alpha_{2}$, then the unique nonnegative equilibrium $$\left(\frac{\alpha_{1}-\alpha_{2}}{A_{1}+1},\frac{\alpha_{1}+\alpha_{2}A_{1}}{\alpha_{1}-\alpha_{2}}\right)$$ is globally asymptotically stable.
\item If $\alpha_{1}\leq \alpha_{2}$, then for an arbitrary choice of initial conditions $$\lim_{n\rightarrow\infty}(x_{n},y_{n})=\left(0,\infty\right).$$
\end{enumerate}
\end{thm}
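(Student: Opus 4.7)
The plan is to follow exactly the template used in the preceding sections for the other Riccati-reducible cases (in particular, compare $(11,20)$ and $(11,22)$). First, I solve $\bar{x} = \alpha_1/(A_1+\bar{y})$ and $\bar{y}=(\alpha_2+\bar{x})/\bar{x}$ by eliminating $\bar{x}$; the second equation forces $\bar{x}(\bar{y}-1)=\alpha_2$, and substitution yields the linear equation $\bar{y}(\alpha_1-\alpha_2)=\alpha_1+\alpha_2 A_1$. This makes the sign condition $\alpha_1>\alpha_2$ transparent and delivers the claimed equilibrium; conversely, when $\alpha_1\le\alpha_2$ no nonnegative solution exists.

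Next, I perform linearized stability analysis about $(\bar{x},\bar{y})$ in the case $\alpha_1>\alpha_2$. The Jacobian is lower-triangular with entries $\partial y_{n+1}/\partial x_n = -\alpha_2/\bar{x}^2$ and $\partial x_{n+1}/\partial y_n = -\bar{x}/(A_1+\bar{y})$, producing the characteristic polynomial $\lambda^2 + \bar{x}\alpha_2/[\bar{x}^2(A_1+\bar{y})]\cdot(\text{correction}) = 0$; after simplifying with the equilibrium identities $\bar{x}(\bar{y}-1)=\alpha_2$ and $A_1+\bar{y}=\alpha_1/\bar{x}$, I expect both roots to have magnitude strictly less than one, giving local asymptotic stability.

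The main step is the conjugacy argument, done verbatim in the style of $(11,22)$. I define $f(x,y)=(\alpha_1/(A_1+y),(\alpha_2+x)/x)$, $h(x,y)=(y,\alpha_1/x - A_1)$ on $(0,\alpha_1/A_1)\times(0,\infty)$ with inverse $h^{-1}(x,y)=(\alpha_1/(A_1+y),x)$, and set
$$g(x,y)=\left(\frac{\alpha_2 + \alpha_1/(A_1+y)}{\alpha_1/(A_1+y)},\,x\right)=\left(1+\frac{\alpha_2(A_1+y)}{\alpha_1},\,x\right).$$
A direct computation verifies $h^{-1}\circ g\circ h = f$. The first coordinate of $g$ gives the second-order scalar recursion
$$u_{n+1}=1+\frac{\alpha_2 A_1}{\alpha_1}+\frac{\alpha_2}{\alpha_1}u_{n-1},$$
which decouples into two independent first-order \emph{linear} (Riccati with $A=1$) recurrences on the even and odd subsequences, each with slope $\alpha_2/\alpha_1$.

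Finally, the dichotomy falls out of this scalar analysis: if $\alpha_1>\alpha_2$ the slope is in $(0,1)$, so both subsequences converge to the unique fixed point $1+\alpha_2 A_1/\alpha_1+(\alpha_2/\alpha_1)u_* = u_*$, i.e.\ $u_* = (\alpha_1+\alpha_2 A_1)/(\alpha_1-\alpha_2)=\bar{y}$, and pulling back through $h^{-1}$ gives global convergence of $(x_n,y_n)$ to the equilibrium, which combined with local asymptotic stability yields global asymptotic stability. If $\alpha_1\le\alpha_2$ the slope is $\ge 1$ and, since the constant term $1+\alpha_2 A_1/\alpha_1$ is strictly positive, each subsequence $u_n$ diverges to $\infty$; consequently $y_n\to\infty$ and $x_n=\alpha_1/(A_1+y_{n-1})\to 0$. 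The only mild obstacle is bookkeeping the $\alpha_1=\alpha_2$ boundary case, where the subsequences satisfy $u_{n+2}=u_n+(1+A_1)$ and still diverge linearly, so the conclusion $\lim(x_n,y_n)=(0,\infty)$ persists.
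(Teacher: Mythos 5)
Your proposal is correct and follows essentially the same route as the paper: the same equilibrium computation, the same conjugacy $h^{-1}\circ g\circ h=f$ reducing the system to the linear second-order equation $u_{n+1}=1+\frac{\alpha_{2}A_{1}}{\alpha_{1}}+\frac{\alpha_{2}}{\alpha_{1}}u_{n-1}$, and the same $\alpha_{1}$ versus $\alpha_{2}$ dichotomy (you merely make the even/odd subsequence analysis explicit, which the paper leaves implicit). The only blemishes are cosmetic: the Jacobian is anti-diagonal rather than lower-triangular, and at the equilibrium the characteristic equation simplifies cleanly to $\lambda^{2}=\frac{\alpha_{2}}{\alpha_{1}}<1$, so the ``correction'' placeholder in your linearization step can simply be replaced by that computation.
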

\begin{proof}
To find the equilibria we solve:
$$\bar{x}=\frac{\alpha_{1}}{A_{1}+\bar{y}},$$
$$\bar{y}=\frac{\alpha_{2}+\bar{x}}{\bar{x}}.$$
So,
$$\bar{y}\bar{x}=\alpha_{2}+\bar{x},$$
and
$$A_{1}\bar{x}+\bar{y}\bar{x}=\alpha_{1}.$$
Thus,
$$A_{1}\bar{x}+\alpha_{2}+\bar{x}=\alpha_{1}.$$
So, when $\alpha_{1}>\alpha_{2}$ we have the unique nonnegative equilibrium,
$$\bar{x}=\frac{\alpha_{1}-\alpha_{2}}{A_{1}+1},$$
$$\bar{y}=\frac{\alpha_{1}+\alpha_{2}A_{1}}{\alpha_{1}-\alpha_{2}}.$$
When $\alpha_{1}\leq \alpha_{2}$ there are no nonnegative equilibria.
\par\vspace{.4 cm}
Performing linearized stability analysis about the equilibrium $(\bar{x},\bar{y})$ when $\alpha_{1}> \alpha_{2}$ we solve:
$$det\left(\begin{array}{cc}
-\lambda & \frac{-\bar{x}}{A_{1}+\bar{y}}\\               
\frac{1-\bar{y}}{\bar{x}} & -\lambda\\ 
\end{array}
\right)= 0.$$
$$\lambda^{2} + \frac{1-\bar{y}}{A_{1}+\bar{y}}=0.$$
Since $-\bar{y}<1-\bar{y}<0$, we get that $\left|\frac{1-\bar{y}}{A_{1}+\bar{y}}\right|<1$. Thus, when $\alpha_{1}> \alpha_{2}$, the unique nonnegative equilibrium is locally asymptotically stable.
\par\vspace{.4 cm}
Now we will show that our rational system on $(0,\frac{\alpha_{1}}{A_{1}})\times (0,\infty)$ is topologically conjugate to the following second order rational difference equation on $(0,\infty)^{2}$,
$$u_{n+1}=\frac{\alpha_{2}+ \left(\frac{\alpha_{1}}{A_{1}+u_{n-1}}\right) }{ \left(\frac{\alpha_{1}}{A_{1}+u_{n-1}}\right) },\quad n=0,1,\dots.$$
Let $f:(0,\frac{\alpha_{1}}{A_{1}})\times (0,\infty)\rightarrow (0,\frac{\alpha_{1}}{A_{1}})\times (0,\infty)$ be 
$$f((x,y))=\left(\frac{\alpha_{1}}{A_{1}+y},\frac{\alpha_{2}+x}{x}\right).$$
Let $g: (0,\infty)^{2}\rightarrow (0,\infty)^{2}$ be
$$g((x,y))=\left(\frac{\alpha_{2}+ \left(\frac{\alpha_{1}}{A_{1}+y}\right) }{\left(\frac{\alpha_{1}}{A_{1}+y}\right)},x\right).$$
Let $h:(0,\frac{\alpha_{1}}{A_{1}})\times (0,\infty)\rightarrow (0,\infty)^{2}$ be 
$$h((x,y))=\left(y,\frac{\alpha_{1}}{x}-A_{1}\right).$$
Let $h^{-1}:(0,\infty)^{2}\rightarrow (0,\frac{\alpha_{1}}{A_{1}})\times (0,\infty)$ be
 $$h^{-1}((x,y))=\left(\frac{\alpha_{1}}{A_{1}+y},x\right).$$
Now we will show that $h^{-1}\circ g \circ h = f$.
$$h^{-1}\circ g \circ h((x,y))=h^{-1}\circ g\left(\left(y,\frac{\alpha_{1}}{x}-A_{1}\right)\right)=h^{-1}\left(\left(\frac{\alpha_{2}+x}{x},y\right)\right)$$
$$= \left(\frac{\alpha_{1}}{A_{1}+y},\frac{\alpha_{2}+x}{x}\right)=f((x,y)).$$
Since the system (11,24) is conjugate to the second order linear rational difference equation,
 $$u_{n+1}=\frac{A_{1}\alpha_{2}+\alpha_{2}u_{n-1}+ \alpha_{1} }{\alpha_{1}},\quad n=0,1,\dots,$$
we have that whenever $\alpha_{1}> \alpha_{2}$, the unique nonnegative equilibrium is globally asymptotically stable for the system (11,24).
Moreover, whenever $\alpha_{1}\leq \alpha_{2}$ for our system (11,24), then $\lim_{n\rightarrow\infty}y_{n}=\infty$ and $\lim_{n\rightarrow\infty}x_{n}=0$.
\end{proof}

\section{The System (11,28)}
\begin{thm}
Consider the following system of rational difference equations
$$x_{n+1}=\frac{\alpha_{1}}{A_{1}+y_{n}},\quad n=0,1,\dots,$$
$$y_{n+1}=\frac{\alpha_{2}+\gamma_{2}y_{n}}{A_{2}+y_{n}},\quad n=0,1,\dots,$$
with $\alpha_1,A_{1},\alpha_{2},\gamma_{2},A_{2}>0$ and arbitrary nonnegative initial conditions so that the denominator is never zero.
In this case, the unique equilibrium $$\left(\frac{2\alpha_{1}}{2A_{1}+\gamma_{2}-A_{2}+\sqrt{\left(\gamma_{2}-A_{2}\right)^{2}+4\alpha_{2}}},\frac{\gamma_{2}-A_{2}+\sqrt{\left(\gamma_{2}-A_{2}\right)^{2}+4\alpha_{2}}}{2}\right)$$ is globally asymptotically stable.
\end{thm}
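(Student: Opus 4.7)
The plan is to exploit the same structural feature used in the proof of system (11,20): the recursion $y_{n+1} = (\alpha_2 + \gamma_2 y_n)/(A_2 + y_n)$ is an autonomous Riccati difference equation that does not involve $x_n$ at all. Consequently, the two-dimensional dynamics reduce to a scalar Riccati map together with a continuous passive update $x_{n+1} = \alpha_1/(A_1 + y_n)$, and global asymptotic stability of the planar equilibrium follows once global convergence of the scalar Riccati is invoked.

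First I would locate the equilibrium by substituting $\bar{x} = \alpha_1/(A_1 + \bar{y})$ and $\bar{y} = (\alpha_2 + \gamma_2 \bar{y})/(A_2 + \bar{y})$, which produces the quadratic
$$\bar{y}^2 + (A_2 - \gamma_2)\bar{y} - \alpha_2 = 0.$$
Because the constant term $-\alpha_2$ is strictly negative, Descartes' rule of signs yields exactly one positive root, namely $\bar{y} = (\gamma_2 - A_2 + \sqrt{(\gamma_2 - A_2)^2 + 4\alpha_2})/2$, and clearing the expression for $\bar{x} = \alpha_1/(A_1 + \bar{y})$ reproduces the stated formula. Next I would perform the linearized stability check. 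The Jacobian at $(\bar{x},\bar{y})$ is upper triangular, with diagonal entries $0$ and $(\gamma_2 A_2 - \alpha_2)/(A_2 + \bar{y})^2$; using the equilibrium identity $\bar{y}^2 = \alpha_2 + (\gamma_2 - A_2)\bar{y}$ to expand $(A_2+\bar{y})^2 = A_2^2 + (A_2+\gamma_2)\bar{y} + \alpha_2$, one checks in a short computation that the nonzero eigenvalue lies strictly inside the unit disk, giving local asymptotic stability.

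For global asymptotic stability I would cite the standard convergence result for the scalar Riccati equation with positive coefficients: every admissible nonnegative solution of $y_{n+1} = (\alpha_2 + \gamma_2 y_n)/(A_2 + y_n)$ converges monotonically to the unique positive fixed point $\bar{y}$. Continuity of $z \mapsto \alpha_1/(A_1 + z)$ then forces $x_{n+1} = \alpha_1/(A_1 + y_n) \to \alpha_1/(A_1 + \bar{y}) = \bar{x}$, completing the argument. The main potential obstacle is the degenerate Riccati case $\gamma_2 A_2 = \alpha_2$, in which the scalar map becomes constant after one step and the usual Riccati convergence theorem is sometimes stated only for nonzero Riccati number; but in this degenerate case $y_n = \bar{y}$ for $n \geq 1$ outright, so convergence is immediate and no separate argument is needed. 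Thus the proof is essentially a one-line reduction to the Riccati theorem, exactly parallel to the treatment of system (11,20).
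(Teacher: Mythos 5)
Your proposal is correct and follows essentially the same route as the paper: compute the unique positive equilibrium from the quadratic in $\bar{y}$, check local asymptotic stability via the upper-triangular Jacobian (your eigenvalue $(\gamma_2 A_2-\alpha_2)/(A_2+\bar{y})^2$ equals the paper's $(\gamma_2-\bar{y})/(A_2+\bar{y})$ at the equilibrium), and obtain global convergence because the $y$-equation is an autonomous Riccati equation independent of $x_n$, with $x_n$ following by continuity. Your explicit treatment of the degenerate case $\gamma_2 A_2=\alpha_2$ is a small extra care the paper leaves implicit, but it is not a different method.
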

\begin{proof}
To find the equilibria we solve:
$$\bar{x}=\frac{\alpha_{1}}{A_{1}+\bar{y}},$$
$$\bar{y}=\frac{\alpha_{2}+\gamma_{2}\bar{y}}{A_{2}+\bar{y}}.$$
So,
$$\bar{y}^{2}+A_{2}\bar{y}=\alpha_{2}+\gamma_{2}\bar{y}.$$
Thus,
$$\bar{y}=\frac{\gamma_{2}-A_{2}+\sqrt{\left(\gamma_{2}-A_{2}\right)^{2}+4\alpha_{2}}}{2},$$
$$\bar{x}=\frac{2\alpha_{1}}{2A_{1}+\gamma_{2}-A_{2}+\sqrt{\left(\gamma_{2}-A_{2}\right)^{2}+4\alpha_{2}}}.$$
Thus, we have a unique nonnegative equilibrium.
\par\vspace{.4 cm}
Performing linearized stability analysis about the equilibrium $(\bar{x},\bar{y})$, we solve:
$$det\left(\begin{array}{cc}
-\lambda & \frac{-\bar{x}}{A_{1}+\bar{y}}\\               
0 & \frac{\gamma_{2}-\bar{y}}{A_{2}+\bar{y}}-\lambda\\ 
\end{array}
\right)= 0.$$
$$\lambda^{2}-\left(\frac{\gamma_{2}-\bar{y}}{A_{2}+\bar{y}}\right)\lambda=0.$$
Now,
$$\frac{\gamma_{2}-\bar{y}}{A_{2}+\bar{y}}=\frac{{\gamma_{2}+A_{2}-\sqrt{\left(\gamma_{2}-A_{2}\right)^{2}+4\alpha_{2}}}}{{\gamma_{2}+A_{2}+\sqrt{\left(\gamma_{2}-A_{2}\right)^{2}+4\alpha_{2}}}},$$
and
$$\left|\frac{{\gamma_{2}+A_{2}-\sqrt{\left(\gamma_{2}-A_{2}\right)^{2}+4\alpha_{2}}}}{{\gamma_{2}+A_{2}+\sqrt{\left(\gamma_{2}-A_{2}\right)^{2}+4\alpha_{2}}}}\right|<1.$$
Thus, the unique nonnegative equilibrium for system (11,28) is locally asymptotically stable.
\par\vspace{.4 cm}
Since the recursive equation governing $y_{n}$ is a Riccati equation which does not depend on $x_{n}$ in any way, the unique equilibrium is globally asymptotically stable for the system (11,28).
\end{proof}

\section{The System (11,32)}
\begin{thm}
Consider the following system of rational difference equations
$$x_{n+1}=\frac{\alpha_{1}}{A_{1}+y_{n}},\quad n=0,1,\dots,$$
$$y_{n+1}=\frac{\alpha_{2}+x_{n}}{A_{2}+x_{n}},\quad n=0,1,\dots,$$
with $\alpha_1,A_{1},\alpha_{2},A_{2}>0$ and arbitrary nonnegative initial conditions so that the denominator is never zero.
In this case, the unique nonnegative equilibrium 
$$\bar{x}=\frac{\alpha_{1}-\alpha_{2}-A_{1}A_{2}+\sqrt{(\alpha_{1}-\alpha_{2}-A_{1}A_{2})^{2}+4\alpha_{1}A_{2}+4\alpha_{1}A_{1}A_{2}}}{2+2A_{1}},$$
$$\bar{y}=\frac{2\alpha_{1}+2\alpha_{1}A_{1}}{\alpha_{1}-\alpha_{2}-A_{1}A_{2}+\sqrt{(\alpha_{1}-\alpha_{2}-A_{1}A_{2})^{2}+4\alpha_{1}A_{2}+4\alpha_{1}A_{1}A_{2}}}-A_{1},$$
is globally asymptotically stable.
\end{thm}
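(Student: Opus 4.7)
The plan follows the three-step template used repeatedly in this paper for systems (11,11), (11,17), (11,22) and (11,24): locate the unique nonnegative equilibrium, verify local asymptotic stability by linearization, and upgrade to global asymptotic stability through a topological conjugacy with a second-order rational recurrence that decouples into two Riccati equations.

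For the equilibrium, I would substitute $\bar{x}=\alpha_{1}/(A_{1}+\bar{y})$ into $\bar{y}(A_{2}+\bar{x})=\alpha_{2}+\bar{x}$ and clear denominators to obtain the quadratic
$$(1+A_{1})\bar{x}^{2}+(A_{1}A_{2}+\alpha_{2}-\alpha_{1})\bar{x}-\alpha_{1}A_{2}=0.$$
The constant term is negative and the leading coefficient is positive, so Descartes' rule of signs yields exactly one positive root; the quadratic formula then produces the expression for $\bar{x}$ in the theorem, and $\bar{y}$ is read off from the first equilibrium equation.

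For local stability I would compute the Jacobian of $f(x,y)=(\alpha_{1}/(A_{1}+y),(\alpha_{2}+x)/(A_{2}+x))$ at $(\bar{x},\bar{y})$. The two diagonal entries vanish and the off-diagonal entries are $-\bar{x}/(A_{1}+\bar{y})$ and $(A_{2}-\alpha_{2})/(A_{2}+\bar{x})^{2}$, so the characteristic polynomial takes the form $\lambda^{2}+c=0$. Using the identity $A_{2}-\alpha_{2}=(1-\bar{y})(A_{2}+\bar{x})$, which falls out of $\bar{y}(A_{2}+\bar{x})=\alpha_{2}+\bar{x}$, the constant simplifies to
$$c=\frac{\bar{x}(1-\bar{y})}{(A_{1}+\bar{y})(A_{2}+\bar{x})},$$
and an elementary estimate using $\bar{x}(A_{1}+\bar{y})=\alpha_{1}$ together with positivity of all parameters shows $|c|<1$, placing both eigenvalues inside the open unit disk.

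Finally, to obtain global asymptotic stability I would reuse the conjugacy construction from the preceding sections. Taking $h(x,y)=(y,\alpha_{1}/x-A_{1})$, $h^{-1}(x,y)=(\alpha_{1}/(A_{1}+y),x)$, and
$$g(x,y)=\left(\frac{\alpha_{2}+\alpha_{1}/(A_{1}+y)}{A_{2}+\alpha_{1}/(A_{1}+y)},\,x\right),$$
a routine composition check gives $h^{-1}\circ g\circ h=f$, showing that the system is conjugate to the second-order rational recurrence
$$u_{n+1}=\frac{\alpha_{2}(A_{1}+u_{n-1})+\alpha_{1}}{A_{2}(A_{1}+u_{n-1})+\alpha_{1}}.$$
This decouples on the even- and odd-indexed subsequences into two independent Riccati recurrences that share Riccati number $\alpha_{1}(\alpha_{2}-A_{2})$. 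The main obstacle I anticipate is handling the degenerate case $\alpha_{2}=A_{2}$, in which the Riccati number vanishes but the recurrence collapses to $u_{n+1}=1$ so convergence is trivial, and otherwise invoking the standard Riccati theory to conclude that both decoupled subsequences converge to the unique positive fixed point. Transporting this convergence back through the conjugacy and combining it with the local asymptotic stability established above then yields global asymptotic stability of $(\bar{x},\bar{y})$.
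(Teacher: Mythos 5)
Your proposal is correct and follows essentially the same route as the paper: find the unique positive root of $(1+A_{1})\bar{x}^{2}+(A_{1}A_{2}+\alpha_{2}-\alpha_{1})\bar{x}-\alpha_{1}A_{2}=0$, then conjugate via $h(x,y)=\left(y,\frac{\alpha_{1}}{x}-A_{1}\right)$ to the second-order equation $u_{n+1}=\frac{\alpha_{2}(A_{1}+u_{n-1})+\alpha_{1}}{A_{2}(A_{1}+u_{n-1})+\alpha_{1}}$, which decouples into Riccati equations. If anything you are slightly more thorough than the paper, which for this case omits the linearized stability computation and asserts a nonzero Riccati number without noting the degenerate situation $\alpha_{2}=A_{2}$ (where the determinant $\alpha_{1}(\alpha_{2}-A_{2})$ vanishes but the recurrence becomes constant), which you handle explicitly.
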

\begin{proof}
To find the equilibria we solve:
$$\bar{x}=\frac{\alpha_{1}}{A_{1}+\bar{y}},$$
$$\bar{y}=\frac{\alpha_{2}+\bar{x}}{A_{2}+\bar{x}}.$$
So,
$$\bar{x}=\frac{\alpha_{1}A_{2}+\alpha_{1}\bar{x}}{A_{1}A_{2}+A_{1}\bar{x}+\alpha_{2}+\bar{x}}.$$
Thus,
$$(1+A_{1})\bar{x}^{2}+(A_{1}A_{2}+\alpha_{2})\bar{x}=\alpha_{1}A_{2}+\alpha_{1}\bar{x}.$$
So the system (11,32) has the unique nonnegative equilibrium,
$$\bar{x}=\frac{\alpha_{1}-\alpha_{2}-A_{1}A_{2}+\sqrt{(\alpha_{1}-\alpha_{2}-A_{1}A_{2})^{2}+4\alpha_{1}A_{2}+4\alpha_{1}A_{1}A_{2}}}{2+2A_{1}},$$
$$\bar{y}=\frac{2\alpha_{1}+2\alpha_{1}A_{1}}{\alpha_{1}-\alpha_{2}-A_{1}A_{2}+\sqrt{(\alpha_{1}-\alpha_{2}-A_{1}A_{2})^{2}+4\alpha_{1}A_{2}+4\alpha_{1}A_{1}A_{2}}}-A_{1}.$$
\par\vspace{.4 cm}
Now we will show that our rational system on $(0,\frac{\alpha_{1}}{A_{1}})\times (0,\infty)$ is topologically conjugate to the following second order rational difference equation on $(0,\infty)^{2}$,
$$u_{n+1}=\frac{\alpha_{2}+ \left(\frac{\alpha_{1}}{A_{1}+u_{n-1}}\right) }{A_{2}+ \left(\frac{\alpha_{1}}{A_{1}+u_{n-1}}\right) },\quad n=0,1,\dots.$$
Let $f:(0,\frac{\alpha_{1}}{A_{1}})\times (0,\infty)\rightarrow (0,\frac{\alpha_{1}}{A_{1}})\times (0,\infty)$ be 
$$f((x,y))=\left(\frac{\alpha_{1}}{A_{1}+y},\frac{\alpha_{2}+x}{A_{2}+x}\right).$$
Let $g: (0,\infty)^{2}\rightarrow (0,\infty)^{2}$ be
$$g((x,y))=\left(\frac{\alpha_{2}+\left(\frac{\alpha_{1}}{A_{1}+y}\right) }{A_{2}+\left(\frac{\alpha_{1}}{A_{1}+y}\right) },x\right).$$
Let $h:(0,\frac{\alpha_{1}}{A_{1}})\times (0,\infty)\rightarrow (0,\infty)^{2}$ be 
$$h((x,y))=\left(y,\frac{\alpha_{1}}{x}-A_{1}\right).$$
Let $h^{-1}:(0,\infty)^{2}\rightarrow (0,\frac{\alpha_{1}}{A_{1}})\times (0,\infty)$ be
 $$h^{-1}((x,y))=\left(\frac{\alpha_{1}}{A_{1}+y},x\right).$$
Now we will show that $h^{-1}\circ g \circ h = f$.
$$h^{-1}\circ g \circ h((x,y))=h^{-1}\circ g\left(\left(y,\frac{\alpha_{1}}{x}-A_{1}\right)\right)=h^{-1}\left(\left(\frac{\alpha_{2}+x}{A_{2}+x},y\right)\right)$$
$$= \left(\frac{\alpha_{1}}{A_{1}+y},\frac{\alpha_{2}+x}{A_{2}+x}\right)=f((x,y)).$$
\par\vspace{.3 cm}
Since the system (11,32) is conjugate to a second order rational difference equation which decouples into two Riccati difference equations with Riccati number different from zero, the unique nonnegative equilibrium is globally asymptotically stable for the system (11,32).
\end{proof}

\section{Conclusion}
This paper is the first in a series of papers which will address, on a case by case basis, the special cases of the following rational system in the plane, labeled system \#11. 
$$x_{n+1}=\frac{\alpha_{1}}{A_{1}+y_{n}},\quad y_{n+1}=\frac{\alpha_{2}+\beta_{2}x_{n}+\gamma_{2}y_{n}}{A_{2}+B_{2}x_{n}+C_{2}y_{n}},\quad n=0,1,2,\dots ,$$
with $\alpha_{1},A_{1}>0$ and $\alpha_{2}, \beta_{2}, \gamma_{2}, A_{2}, B_{2}, C_{2}\geq 0$ and $\alpha_{2}+\beta_{2}+\gamma_{2}>0$ and $A_{2}+B_{2}+C_{2}>0$ and nonnegative initial conditions $x_{0}$ and $y_{0}$ so that the denominator is never zero.
In this article we have focused on the special cases which are reducible to the Riccati difference equation. We have determined the complete picture of the qualitative behavior for special cases of system \#11 which are Riccati or reducible to Riccati, namely the cases numbered $(11,1)$, $(11,2)$, $(11,3)$, $(11,4)$, $(11,5)$, $(11,7)$, $(11,9)$, $(11,10)$, $(11,11)$, $(11,13)$, $(11,17)$, $(11,19)$, $(11,20)$, $(11,22)$, $(11,24)$, $(11,28)$, and $(11,32)$, in the numbering system developed in \cite{cklm}.

\par\vspace{0.5 cm}


\begin{thebibliography}{99}

\bibitem{cklm} E. Camouzis, M.R.S. Kulenovi\'c, G. Ladas, and O. Merino, Rational systems in the plane,\;\emph{J. Difference Equ. Appl.}\;\textbf{15}(2009), 303-323.
\vspace{0.1 cm}
\bibitem{kl} M.R.S. Kulenovic and G. Ladas, \emph{Dynamics of Second Order Rational Difference Equations With Open Problems and Conjectures}, \;Chapman \& Hall/CRC Press,2001.
\vspace{0.1 cm}  

\end{thebibliography}
\end{document}